\tikzset{every picture/.style=semithick}
\tikzset{->-/.style={decoration={
    markings,
    mark=at position #1 with {\arrow{angle 60}}},postaction={decorate}}}
\theoremstyle{plain}
    \newtheorem{theorem}{Theorem}[section]
    \newtheorem{corollary}[theorem]{Corollary}
    \newtheorem{lemma}[theorem]{Lemma}
    \newtheorem{proposition}[theorem]{Proposition}
\theoremstyle{definition}
    \newtheorem{definition}[theorem]{Definition}
\theoremstyle{remark}
    \newtheorem{remark}[theorem]{Remark}
\title[A class of rearrangement groups that are not IG]{A Class of Rearrangement Groups that are not Invariably Generated}
\author{Davide Perego \and Matteo Tarocchi}
\date{}
\subjclass[2020]{Primary 20F65; Secondary 20F38, 28A80, 20F05, 20E45}
\thanks{Both the authors are members of the Gruppo Nazionale per le Strutture Algebriche, Geometriche e le loro Applicazioni (GNSAGA) of the Istituto Nazionale di Alta Matematica (INdAM)}
\address{Dipartimento di Matematica e Applicazioni, Universit\`a degli Studi di Milano-Bicocca, Ed. U5, Via R.Cozzi 55, 20125 Milano, Italy, EU}
\curraddr{Instituto de Matemáticas, Universidad de Sevilla, Ed. Celestino Mutis, Campus de Reina Mercedes, Avda. Reina Mercedes s/n, 41012 Sevilla, Spain, EU}
\email{\href{mailto:dperego9@gmail.com}{dperego9@gmail.com}}
\address{Dipartimento di Matematica e Applicazioni, Universit\`a degli Studi di Milano-Bicocca, Ed. U5, Via R.Cozzi 55, 20125 Milano, Italy, EU}
\email{\href{mailto:matteo.tarocchi.math@gmail.com}{matteo.tarocchi.math@gmail.com}}
\begin{document}

\begin{abstract}
    A group $G$ is invariably generated if there exists a subset $S \subseteq G$ such that, for every choice $g_s \in G$ for $s \in S$, the group $G$ is generated by $\{ s^{g_s} \mid s \in S \}$.
    In \cite{Gelander2016InvariableGO} Gelander, Golan and Juschenko showed that Thompson groups $T$ and $V$ are not invariably generated.
    Here we generalize this result to the larger setting of rearrangement groups, proving that any subgroup of a rearrangement group that has a certain transitive property is not invariably generated.
\end{abstract}

\maketitle

\section{Introduction}
\label{SEC intro}

A group $G$ is \textbf{invariably generated} (\textbf{IG} for short) if there exists a subset $S \subseteq G$ such that, for every choice $g_s \in G$ for $s \in S$, the group $G$ is generated by $\{ s^{g_s} \mid s \in S \}$ (with $g^h$ we mean $h^{-1} g h$).
In other words, $G$ is invariably generated if there exists a generating set $S$ such that, if one modifies $S$ by conjugating each $s \in S$ by some $g_s \in G$ (possibly a distinct $g_s$ for each $s$), then they still end up with a generating set for $G$.

This notion has been present in literature for a long time, for example in \cite{Wiegold1976TransitiveGW, Wiegold1976TransitiveGW2} under a different but equivalent definition.
However, the term \textit{Invariable Generation} was first used in \cite{DIXON199225} by Dixon in his study about computational Galois theory.
Finite groups are invariably generated, hence interesting questions in that setting generally involve sizes of invariable generating sets (\cite{KANTOR2011302, GARZONI2020218}).
Instead, the context of infinite groups presents both examples of IG and non-IG groups: Houghton groups are IG for all $n \geq 2$ (\cite{COX2022120}), whereas certain convergence groups (in particular hyperbolic groups) and certain arithmetic groups are non-IG (\cite{8200290, 10.1093/imrn/rnw137}).
Moreover, in \cite{Gelander2016InvariableGO} Gelander, Golan and Juschenko proved that Thompson group $F$ is (finitely) invariably generated (later generalized in \cite{FFIG}), while Thompson groups $T$ and $V$ are not invariably generated (see \cite{cfp} for an introduction to these groups).
Since Thompson groups have so many generalizations, it is natural to ask whether invariable generation can be studied for those generalizations using similar methods.

One of the many generalizations of Thompson groups is the class of rearrangement groups, introduced by Belk and Forrest in \cite{belk2016rearrangement}, which are defined as certain groups of homeomorphisms of limit spaces of sequences of graphs.
Many known fractals (such as the Basilica Julia set and the Airplane Julia set) can be realized as such limit spaces.

The class of rearrangement groups contains the Higman-Thompson groups $F_{r,n}$, $T_{r,n}$ and $V_{r,n}$ (\cite{higman1974finitely, BROWN198745}), topological full groups of branching one-sided edge-shifts (\cite{Matui}), certain picture groups (\cite{guba1997diagram}), along with many new groups acting on known fractals, such as the Basilica Thompson group $T_B$ (\cite{Belk_2015}), the Airplane rearrangement group (introduced in \cite{belk2016rearrangement} and studied in \cite{Airplane}) and many others.

The aim of this work is to generalize the results from \cite{Gelander2016InvariableGO} and their proofs to the setting of rearrangement groups, proving the following result:

\medskip
\noindent
\phantomsection\label{THM main intro}
\textbf{Main Theorem 1.1.}
\emph{A CO-transitive subgroup $G$ of a rearrangement group $G_\mathcal{X}$ is not invariably generated.}
\medskip

By \textbf{CO-transitive} (compact-open transitive, \cite{CO-compact}) we mean that $G$ acts on a space $X$ in such a way that, for each proper compact $K$ and each non-empty open $U$ of $X$, there is an element of $G$ that maps $K$ inside $U$.
We will work with a condition (\textit{weak cell-transitivity}, introduced in \cref{SEC transitivity}) that is equivalent to CO-transitivity and easier to verify in the language of replacement systems of graphs.

The Main Theorem has itself a generalization which is worth mentioning:

\medskip
\noindent
\textbf{Corollary 1.2.}
\emph{If $1 \neq N \trianglelefteq G \leq G_\mathcal{X}$, where $G_\mathcal{X}$ is a rearrangement group and $G$ is weakly cell-transitive, then $N$ is not invariably generated.}
\medskip

In particular, many known rearrangement groups are CO-transitive, so we have the following new results:

\medskip
\noindent
\textbf{Corollary 1.3.}
\emph{The following groups (and their commutator subgroups) are not invariably generated:
\begin{itemize}
    \item the Higman-Thompson groups $T_{r,n}$ and $V_{r,n}$ (\cite{higman1974finitely, BROWN198745});
    \item the Basilica Thompson group $T_B$ (\cite{Belk_2015}) and its generalizations (\cite{belk2016rearrangement});
    \item the Airplane rearrangement group $T_A$ (\cite{belk2016rearrangement,Airplane});
    \item the Vicsek rearrangement group and its generalizations (\cite{belk2016rearrangement});
    \item topological full groups of one-sided irreducible branching edge-shifts (\cite{Matui}).
\end{itemize}}
\medskip

We notice that there are currently no known necessary or sufficient conditions to establish whether a subgroup of a rearrangement group is itself a rearrangement group, but that our results give sufficient conditions for non-invariable generation among subgroups of a rearrangement group.

\section{Rearrangement Groups}\label{SEC definitions}

Limit spaces of replacement systems and their rearrangements were introduced in \cite{belk2016rearrangement}, which goes in much more details than we will get to do. In this section we briefly describe the basics of this topic.

\subsection{Replacement systems and limit spaces}\label{SUBSEC limit_spaces}

Essentially, a \textbf{replacement system} consists of a \textbf{base graph} $\Gamma$ colored by the set of colors $\mathsf{C}$, along with a \textbf{replacement graph} $R_c$ for every color $c \in \mathsf{C}$, each with an initial vertex and a terminal vertex.
For example, \cref{fig_replacement_A} depicts the so called Airplane replacement system, denoted by $\mathcal{A}$.
We can expand the base graph $\Gamma$ by replacing one of its edges $e$ with the replacement graph $R_c$ indexed by the color $c$ of $e$, as in \cref{fig_exp_A}.
The graph resulting from this process of replacing one edge with the appropriate replacement graph is called a \textbf{simple expansion}.
Simple expansions can be iterated any finite amount of times, which generate the so-called \textbf{expansions} of the replacement system, such as the one in \cref{fig_exp_A_generic}.

\begin{figure}\centering
\begin{subfigure}{.4\textwidth}
\centering
\begin{tikzpicture}[scale=.8]
    \draw[->-=.5,blue] (-0.5,0) -- node[above]{L} (-2,0) node[black,circle,fill,inner sep=1.25]{}; \draw[->-=.5,blue] (0.5,0) -- node[above]{R} (2,0) node[black,circle,fill,inner sep=1.25]{};
    \draw[->-=.5,red] (0.5,0) node[circle,fill,inner sep=1.25]{} to[out=90,in=90,looseness=1.7] node[above]{T} (-0.5,0);
    \draw[->-=.5,red] (-0.5,0) node[black,circle,fill,inner sep=1.25]{} to[out=270,in=270,looseness=1.7] node[below]{B} (0.5,0) node[black,circle,fill,inner sep=1.25]{};
\end{tikzpicture}
\caption{The base graph $A_1$.}
\label{fig_A_base}
\end{subfigure}
\begin{subfigure}{.5\textwidth}
\centering
\begin{subfigure}{.45\textwidth}
\centering
\begin{tikzpicture}[scale=.8]
    \draw[->-=.5,draw=red] (0,0) node[left]{$v_i$} node[black,circle,fill,inner sep=1.25]{} -- (0,2.1) node[left]{$v_t$} node[black,circle,fill,inner sep=1.25]{};
    
    \draw[-stealth] (0.35,1.05) -- (0.65,1.05);
    
    \draw[->-=.5,red] (1,0) node[black,circle,fill,inner sep=1.25]{} node[black,left]{$v_i$} -- node[right]{a} (1,1.05);
    \draw[->-=.5,red] (1,1.05) -- node[right]{c} (1,2.1) node[black,circle,fill,inner sep=1.25]{} node[black,left]{$v_t$};
    \draw[->-=.5,blue] (1,1.05) node[black,circle,fill,inner sep=1.25]{} -- node[right=.4cm]{b} (2,1.05) node[black,circle,fill,inner sep=1.25]{};
\end{tikzpicture}
\end{subfigure}
\begin{subfigure}{.45\textwidth}
\centering
\begin{tikzpicture}[scale=.8]
    \draw[->-=.5,draw=blue] (0,0) node[black,left]{$v_i$} node[black,circle,fill,inner sep=1.25]{} -- (0,2.1) node[black,left]{$v_t$} node[black,circle,fill,inner sep=1.25]{};
    
    \draw[-stealth] (0.35,1.05) -- (0.65,1.05);
    
    \begin{scope}[shift={(.2,0)}]
    \draw[->-=.5,blue] (1.35,0.7) -- node[right]{e} (1.35,0) node[black,circle,fill,inner sep=1.25]{} node[black,left]{$v_i$};
    \draw[->-=.5,blue] (1.35,1.4) -- node[right]{h} (1.35,2.1) node[black,circle,fill,inner sep=1.25]{} node[black,left]{$v_t$};
    \draw[->-=.5,red] (1.35,1.4) to[out=180,in=180,looseness=1.7] node[left]{f} (1.35,0.7);
    \draw[->-=.5,red] (1.35,0.7) node[black,circle,fill,inner sep=1.25]{} to[out=0,in=0,looseness=1.7] node[right]{g} (1.35,1.4) node[black,circle,fill,inner sep=1.25]{};
    \end{scope}
\end{tikzpicture}
\end{subfigure}
\caption{The two replacement rules: $e \to R_{red}$ if $e$ is red, and $e \to R_{blue}$ if $e$ is blue.}
\label{fig_A_replacement_rule}
\end{subfigure}
\caption{The Airplane replacement system $\mathcal{A}$.}
\label{fig_replacement_A}
\end{figure}

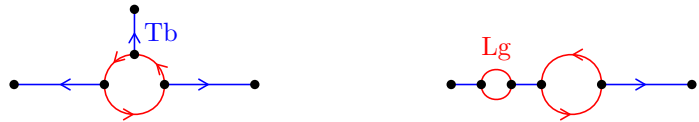
\begin{figure}\centering
\begin{subfigure}{.45\textwidth}\centering
\begin{tikzpicture}[scale=.8]
    \draw[->-=.5,blue] (-0.5,0) -- (-2,0) node[black,circle,fill,inner sep=1.25]{}; \draw[->-=.5,blue] (0.5,0) -- (2,0) node[black,circle,fill,inner sep=1.25]{};
    \draw[->-=.5,red] (0.5,0) to[out=90,in=0] (0,0.5);
    \draw[->-=.5,blue] (0,0.5) -- node[right]{Tb} (0,1.25) node[black,circle,fill,inner sep=1.25]{};
    \draw[->-=.5,red] (0,0.5) node[black,circle,fill,inner sep=1.25]{} to[out=180,in=90] (-0.5,0);
    \draw[->-=.5,red] (-0.5,0) node[black,circle,fill,inner sep=1.25]{} to[out=270,in=270,looseness=1.7] (0.5,0) node[black,circle,fill,inner sep=1.25]{};
\end{tikzpicture}
\end{subfigure}
\begin{subfigure}{.45\textwidth}
\centering
\begin{tikzpicture}[scale=.8]
    \draw[blue] (-0.5,0) -- (-1,0); \draw[blue] (-2,0) node[black,circle,fill,inner sep=1.25]{} -- (-1.5,0);
    \draw[red] (-1,0) to[out=90,in=90,looseness=1.7] node[above]{Lg} (-1.5,0);
    \draw[red] (-1.5,0) node[black,circle,fill,inner sep=1.25]{} to[out=270,in=270,looseness=1.7] (-1,0) node[black,circle,fill,inner sep=1.25]{};
    \draw[->-=.5,blue] (0.5,0) -- (2,0) node[black,circle,fill,inner sep=1.25]{};
    \draw[->-=.5,red] (0.5,0) to[out=90,in=90,looseness=1.7] (-0.5,0);
    \draw[->-=.5,red] (-0.5,0) node[black,circle,fill,inner sep=1.25]{} to[out=270,in=270,looseness=1.7] (0.5,0) node[black,circle,fill,inner sep=1.25]{};
    \node at (0,1.25) {};
\end{tikzpicture}
\end{subfigure}
\caption{Two simple expansions of the base graph of the Airplane replacement system $\mathcal{A}$.}
\label{fig_exp_A}
\end{figure}

\begin{figure}\centering
\begin{tikzpicture}[scale=1.65]
\draw[blue] (-0.5,0) -- (-2,0);
\draw[blue] (0.5,0) -- (2,0);
\draw[blue] (0,-0.5) -- (0,-1.25);
\draw[blue] (0.35,-0.35) -- (0.75,-0.75);
\draw[blue] (0.55,-0.55) -- (0.7,-0.4);
\draw[blue] (1.25,0) -- (1.25,0.75);
\draw[blue] (1.25,0) -- (1.625,0.375);
\draw[blue] (-1.25,0) -- (-1.25,0.75);
\draw[blue] (-1.25,0.5) -- (-1.05,0.5);
\draw[blue] (-1.75,0) -- (-1.75,-0.2);
\draw[red] (0,0) circle (0.5);
\draw[red,fill=white] (-1.25,0) circle (0.25);
\draw[red,fill=white] (1.25,0) circle (0.25);
\draw[red,fill=white] (0.55,-0.55) circle (0.0625);
\draw[red,fill=white] (-1.25,0.5) circle (0.0625);
\draw[red,fill=white] (-1.11875,0.5) circle (0.025);
\draw[red,fill=white] (-1.75,0) circle (0.0625);
\end{tikzpicture}
\caption{A generic expansion of the Airplane replacement system $\mathcal{A}$.}
\label{fig_exp_A_generic}
\end{figure}
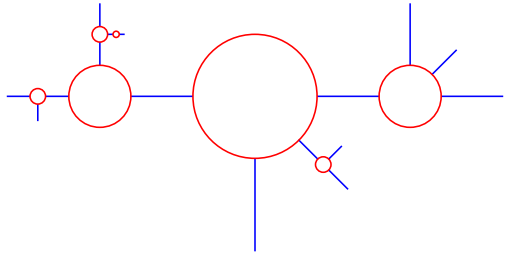

We say that a replacement system is \textit{polychromatic} if $\mathsf{C}$ has more than one element, e.g. the Airplane replacement system $\mathcal{A}$.
Otherwise a replacement system is said to be \textit{monochromatic}, such as the Basilica replacement system (Example 1.2 of \cite{belk2016rearrangement}).

\phantomsection\label{TXT expanding}
A replacement system is said to be \textbf{expanding} if:
\begin{itemize}
    \item neither the base graph nor any replacement graph contains isolated vertices;
    \item in each replacement graph, the initial and terminal vertices are not connected by an edge;
    \item each replacement graph has at least three vertices and two edges.
\end{itemize}

Consider the \textbf{full expansion sequence}, which is the sequence of graphs obtained by replacing, at each step, every edge with the appropriate replacement graph, starting from the base graph.

Whenever a replacement system is expanding, we can define its \textbf{limit space}, which is essentially the limit of the full expansion sequence.
Continuing with our example, the Airplane limit space is depicted in \cref{FIG A limit space}.
Limit spaces have nice topological properties, such as being compact and metrizable (Theorem 1.25 of \cite{belk2016rearrangement}).
Note that then there is no concern over the choice of metric (see Notes 1.30 (1) of \cite{belk2016rearrangement}).

\begin{figure}[b]\centering
\includegraphics[width=.6\textwidth]{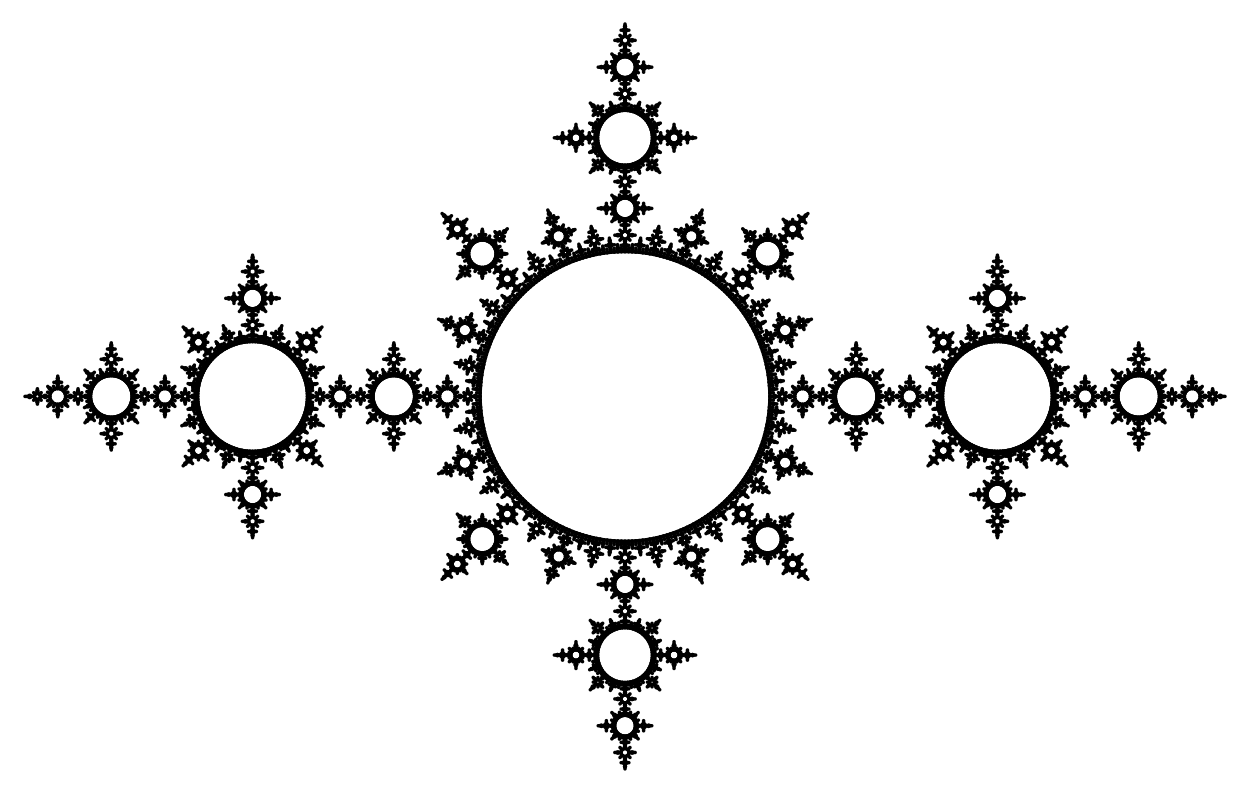}
\caption{The Airplane limit space. Image from \cite{belk2016rearrangement}.}
\label{FIG A limit space}
\end{figure}

\subsection{Cells of a limit space}\label{SUBSEC rearrangements}

Intuitively, a cell $C(e)$ of a limit space corresponds to the edge $e$ of some expansion, along with everything that appears from that edge in later expansions.
\cref{fig_cells_A} shows two examples of cells of $\mathcal{A}$.

\begin{figure}\centering
\includegraphics[width=.425\textwidth]{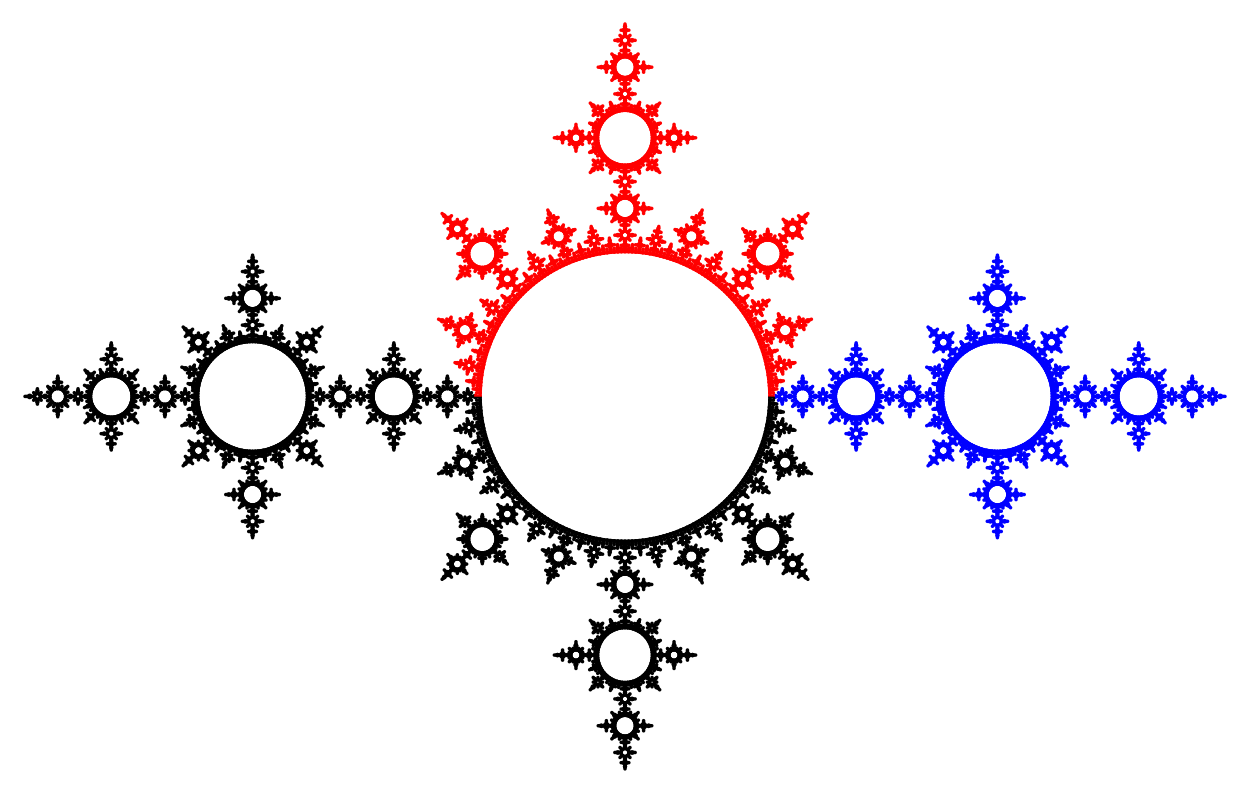}
\caption{The two types of cells in $\mathcal{A}$, distinguished by the color of the generating edge.}
\label{fig_cells_A}
\end{figure}

\phantomsection\label{TXT gluing relation}
Each edge of an expansion of the base graph can be seen as a finite \textit{address} (a sequence of symbols), as in \cref{fig_exp_A}.
Each bit of the address corresponds to an edge of a replacement graph, except for the first one which instead belongs to the base graph.
Points of the limit space are infinite addresses quotiented under a \textit{gluing relation}, which is given by edge adjacency.
More precisely, two sequences $\omega_1 \omega_2 \dots$ and $\omega_1' \omega_2' \dots$ are equivalent when for all $n \in \mathbb{N}$ the prefixes $\omega_1 \dots \omega_n$ and $\omega_1' \dots \omega_n'$ represent edges that share at least a vertex.
With this in mind, since the fibers of the projection are at most countable, it is clear that limit spaces and cells are uncountable.

We call \textit{gluing vertices} of a limit space those points that descend from vertices of some expansion of the base graph, whereas any other point is said to be a \textit{regular point}.

Each cell $C(e)$ has one or two \textit{boundary points}, namely the gluing vertices that descend from the endpoints of the edge $e$.
As defined in \cite{belk2016rearrangement}, the complement of the boundary points is called the \textbf{cell interior}.
We instead denote by $\bm{\mathring{C}(e)}$ the topological interior of the cell $C(e)$.

\begin{remark}
\label{RMK interior}
The topological interior $\mathring{C}(e)$ of the cell $C(e)$ may or may not be the same as the cell interior.
More precisely, the cell interior never includes any of the boundary points, whereas the topological interior includes a boundary point if and only if that point happens to have a unique address, or equivalently it is contained in a unique edge of each expansion of the base graph.
\end{remark}

We would like to highlight the following property regarding cells and balls.

\begin{lemma}
\label{LEM balls cells}
Each ball of a limit space contains a cell and each cell contains a ball.
\end{lemma}

\begin{proof}
Let $B$ be a ball with radius $r > 0$ centered in $p \in X$.
If $p$ is a regular point, $B$ must contain a ball centered at a gluing vertex, because $B$ is open and the set of gluing vertices is dense (since its preimage is dense in the space of infinite addresses), so we can assume that $p$ is a gluing vertex.

Consider the $n$-th star of $p$, denoted by $St_n(p)$, which is the union of $\{p\}$ with the cell interiors of the cells corresponding to the edges of the $n$-th full expansion graph that are incident on $p$.
As is proved in Proposition 1.29 of \cite{belk2016rearrangement}, the sequence of diameters of $St_n(p)$ goes to zero as $n$ approaches infinity, so there exists some $N \in \mathbb{N} = \{0,1,\dots\}$ such that $St_N(p) \subseteq B$.
Since each star clearly contains a cell, $B$ must contain a cell too.

The converse is straightforward, since the cell interior of each cell is open and non-empty.
\end{proof}

\subsection{Rearrangements of limit spaces}

There are different \textit{types} of cells $C(e)$, distinguished by two aspects of the generating edge $e$: its color and whether or not it is a loop.
It is not hard to see that there is a \textbf{canonical homeomorphism} between any two cells of the same type.
A canonical homeomorphism between two cells can essentially be thought as a transformation that maps the first cell ``rigidly'' to the second, or equivalently as a prefix exchange of \hyperref[TXT gluing relation]{addresses} (the much more detailed definition can be found in \cite{belk2016rearrangement}).

\begin{definition}
\label{DEF cellular partition}
A \textbf{cellular partition} of the limit space $X$ is a cover of $X$ by finitely many cells whose cell interiors are disjoint.
\end{definition}

Note that there is a natural bijection between the set of expansions of a replacement system and the set of cellular partitions.

\begin{definition}
\label{DEF rearrangement}
A homeomorphism $f: X \to X$ is called a \textbf{rearrangement} of $X$ if there exists a cellular partition $\mathcal{P}$ of $X$ such that $f$ restricts to a canonical homeomorphism on each cell of $\mathcal{P}$.
\end{definition}

It can be proved that the rearrangements of a limit space $X$ form a group under composition, called the \textbf{rearrangement group} of $X$.

\begin{figure}\centering
\begin{subfigure}{.45\textwidth}
\centering
\begin{tikzpicture}
    \draw[->-=.5] (0,0) -- (4,0);
    \node at (0,0) [circle,fill,inner sep=1.5]{};
    \node at (4,0) [circle,fill,inner sep=1.5]{};
\end{tikzpicture}
\caption{The base graph $\Gamma$.}
\end{subfigure}
\begin{subfigure}{.45\textwidth}
\centering
\begin{tikzpicture}
\draw[-angle 60] (0,0) node[circle,fill,inner sep=1.25]{} node[left]{$v_i$} -- (0,1); \draw (0,1) -- (0,2) node[circle,fill,inner sep=1.25]{} node[left]{$v_t$};

\draw[-stealth] (0.4,1) -- (0.8,1);

\draw[-angle 60] (1.2,0) node[circle,fill,inner sep=1.25]{} node[left]{$v_i$} -- (1.2,0.5); \draw (1.2,0.5) -- (1.2,1) node[circle,fill,inner sep=1.25]{}; \draw[-angle 60] (1.2,1) -- (1.2,1.5); \draw (1.2,1.5) -- (1.2,2) node[circle,fill,inner sep=1.25]{} node[left]{$v_t$};
\end{tikzpicture}
\caption{The replacement rule $e \to R$.}
\end{subfigure}

\caption{The replacement system for the dyadic subdivision of the unit interval $[0,1]$, whose rearrangement group is Thompson group $F$.}
\label{fig_replacement_interval}
\end{figure}
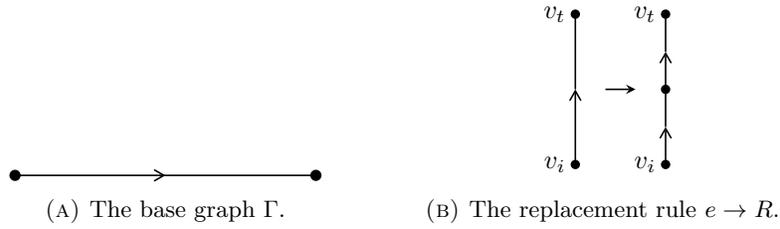

\begin{figure}\centering
\begin{subfigure}{.45\textwidth}
\centering
\begin{tikzpicture}
    \draw[->-=.5] (0,0) circle (1);
    \node at (1,0) [circle,fill,inner sep=1.5]{};
\end{tikzpicture}
\caption{The base graph $\Gamma$.}
\end{subfigure}
\begin{subfigure}{.45\textwidth}
\centering
\begin{tikzpicture}
\draw[-angle 60] (0,0) node[circle,fill,inner sep=1.25]{} node[left]{$v_i$} -- (0,1); \draw (0,1) -- (0,2) node[circle,fill,inner sep=1.25]{} node[left]{$v_t$};

\draw[-stealth] (0.4,1) -- (0.8,1);

\draw[-angle 60] (1.2,0) node[circle,fill,inner sep=1.25]{} node[left]{$v_i$} -- (1.2,0.5); \draw (1.2,0.5) -- (1.2,1) node[circle,fill,inner sep=1.25]{}; \draw[-angle 60] (1.2,1) -- (1.2,1.5); \draw (1.2,1.5) -- (1.2,2) node[circle,fill,inner sep=1.25]{} node[left]{$v_t$};
\end{tikzpicture}
\caption{The replacement rule $e \to R$.}
\end{subfigure}

\caption{The replacement system for the dyadic subdivision of the unit circle $S^1$, whose rearrangement group is Thompson group $T$.}
\label{fig_replacement_circle}
\end{figure}
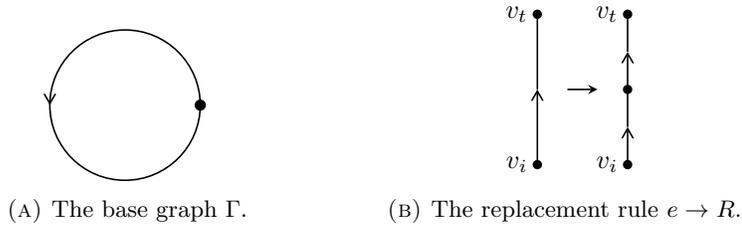

\begin{figure}\centering
\begin{subfigure}{.45\textwidth}
\centering
\begin{tikzpicture}
    \draw[->-=.5] (0,0) -- (4,0);
    \node at (0,0) [circle,fill,inner sep=1.5]{};
    \node at (4,0) [circle,fill,inner sep=1.5]{};
\end{tikzpicture}
\caption{The base graph $\Gamma$.}
\end{subfigure}
\begin{subfigure}{.45\textwidth}
\centering
\begin{tikzpicture}
\draw[-angle 60] (0,0) node[circle,fill,inner sep=1.25]{} node[left]{$v_i$} -- (0,1); \draw (0,1) -- (0,2) node[circle,fill,inner sep=1.25]{} node[left]{$v_t$};

\draw[-stealth] (0.4,1) -- (0.8,1);

\draw[-angle 60] (1.2,0) node[circle,fill,inner sep=1.25]{} node[left]{$v_i$} -- (1.2,0.5); \draw (1.2,0.5) -- (1.2,0.8) node[circle,fill,inner sep=1.25]{}; \draw[-angle 60] (1.2,1.2) node[circle,fill,inner sep=1.25]{} -- (1.2,1.7); \draw (1.2,1.7) -- (1.2,2) node[circle,fill,inner sep=1.25]{} node[left]{$v_t$};
\end{tikzpicture}
\caption{The replacement rule $e \to R$.}
\end{subfigure}

\caption{The replacement system for the Cantor set, whose rearrangement group is Thompson group $V$.}
\label{fig_replacement_cantor}
\end{figure}
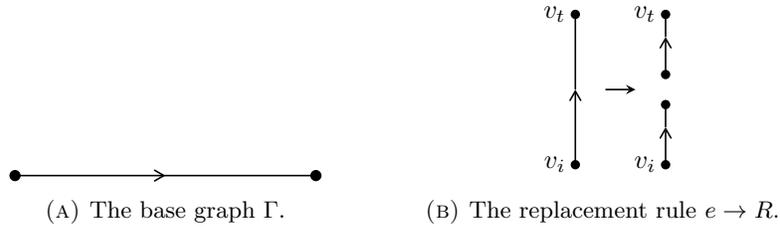

\begin{figure}\centering
\begin{minipage}{.345\textwidth}\centering
\includegraphics[width=\textwidth]{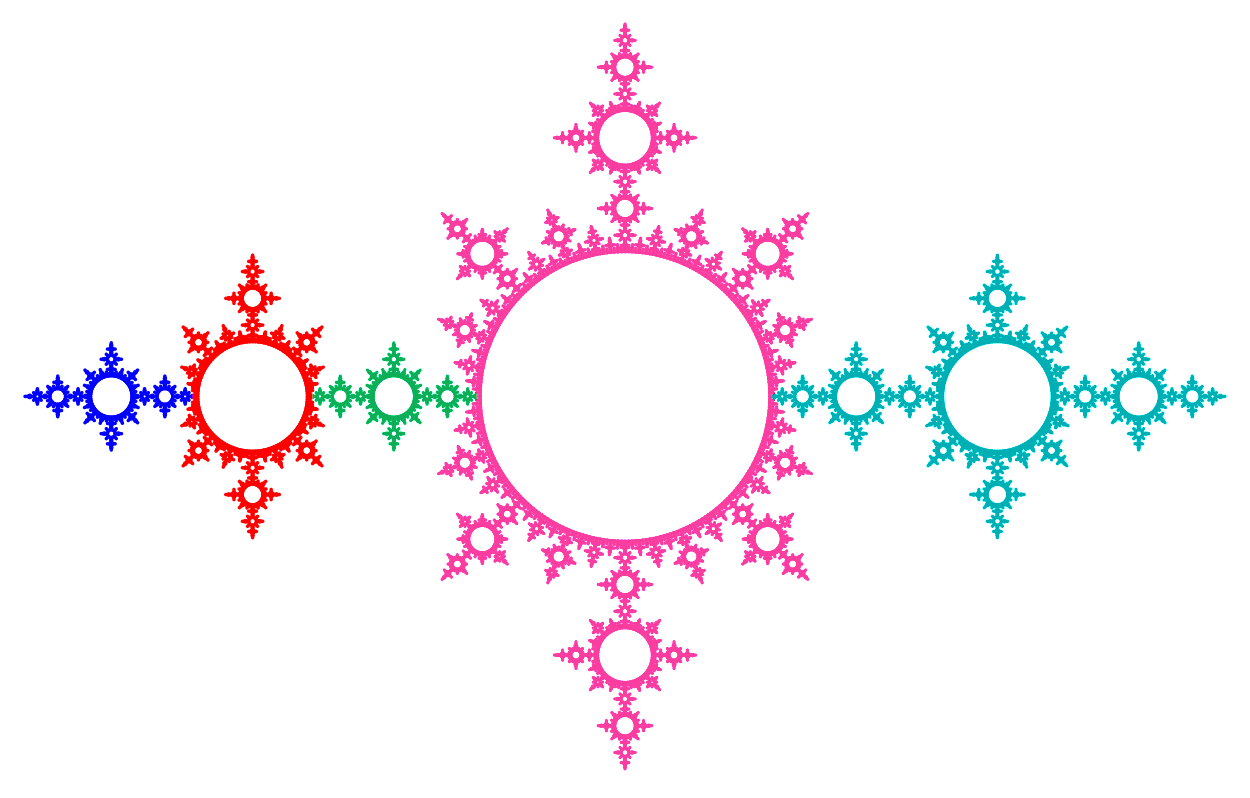}
\end{minipage}%
\begin{minipage}{.05\textwidth}\centering
\begin{tikzpicture}[scale=1]
    \draw[-to] (0,0) -- (.3,0);
\end{tikzpicture}
\end{minipage}%
\begin{minipage}{.345\textwidth}\centering
\includegraphics[width=\textwidth]{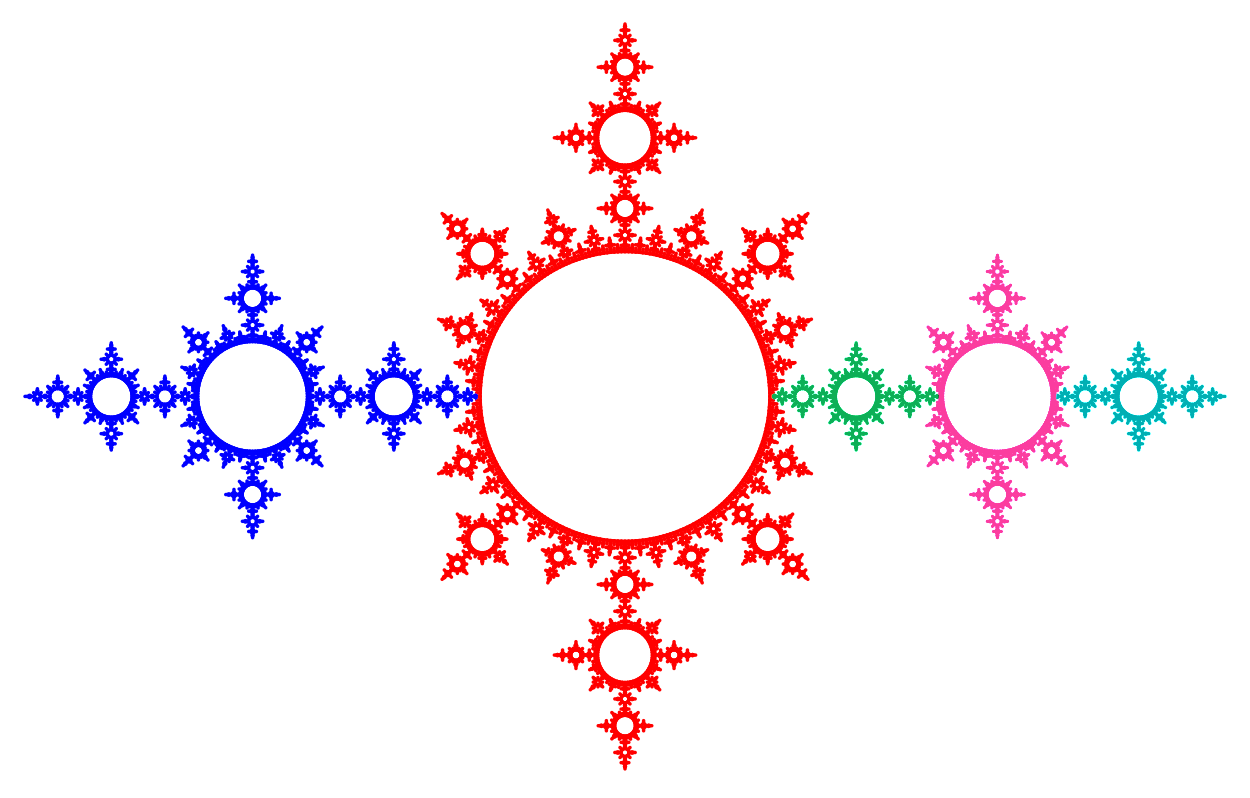}
\end{minipage}\\
\vspace*{10pt}
\begin{tikzpicture}[scale=1]
    \draw[blue] (-2,0) -- (-1.5,0);
    \draw[red] (-1.25,0) circle (0.25);
    \draw[Green] (-1,0) -- (-0.5,0);
    \draw[Green,fill=white] (-0.75,0) circle (0.1);
    \draw[Magenta] (0,0) circle (0.5);
    \draw[TealBlue] (0.5,0) -- (2,0);
    
    \draw[-to] (2.35,0) -- (2.65,0);
    
    \draw[blue] (3,0) -- (4.5,0);
    \draw[red] (5,0) circle (0.5);
    \draw[Green] (5.5,0) -- (6,0);
    \draw[Green,fill=white] (5.75,0) circle (0.1);
    \draw[Magenta] (6.25,0) circle (0.25);
    \draw[TealBlue] (6.5,0) -- (7,0);
\end{tikzpicture}
\caption{A rearrangement of the Airplane limit space, along with a graph pair diagram that represents it.}
\label{fig_action_alpha}
\end{figure}

In the setting of rearrangement groups, the trio of Thompson groups $F$, $T$ and $V$ are realized by the replacement systems in Figures \ref{fig_replacement_interval}, \ref{fig_replacement_circle}, \ref{fig_replacement_cantor}.
Similarly to how dyadic rearrangements (the elements of Thompson's groups) are specified by certain pairs of dyadic subdivisions, rearrangements of a limit space are specified by certain graph isomorphisms between expansions of the replacement systems, called \textbf{graph pair diagrams}.
For example, the rearrangement of the Airplane limit space depicted in \cref{fig_action_alpha} is specified by the graph isomorphism depicted in the same figure.
Colors here mean that each edge of the domain graph is mapped to the edge of the same color in the range graph.

Graph pair diagrams can be \textit{expanded} by expanding an edge in the domain graph and its image in the range graph, resulting in a new graph pair diagram that represents the same rearrangement.
It is important to note that, for each rearrangement, there exists a unique \textbf{reduced} graph pair diagram, where reduced means that it is not the result of an expansion of any other graph pair diagram.

\section{Weak Cell-Transitivity}
\label{SEC transitivity}

Let $G_\mathcal{X}$ be the rearrangement group associated to an \hyperref[TXT expanding]{expanding} replacement system $\mathcal{X} = (X_0, R, \mathsf{C})$, where $X_0$ is the base graph, $R = \{ X_i \mid i \in \mathsf{C} \}$ is the set of replacement graphs and $\mathsf{C}$ is the set of colors.
We denote by $X$ the limit space associated to $\mathcal{X}$.

\begin{definition}
\label{DEF rearrangement transitive}
A subgroup $G$ of a rearrangement group $G_\mathcal{X}$ is \textbf{weakly cell-transitive} when, for each cell $C$ and each proper union of finitely many cells $A$, there exists a $g \in G$ such that $g(A) \subseteq C$.
\end{definition}

As far as the writers know, this class contains most of the rearrangement groups that have been studied so far, along with some notable subgroups.
Indeed, the Basilica rearrangement group $T_B$ and its commutator subgroup $[T_B, T_B]$ (first studied in \cite{Belk_2015}), the Airplane rearrangement group $T_A$ and $[T_A, T_A]$ (\cite{Airplane}) and the rearrangement group of the Vicsek fractal (Example 2.1 of \cite{belk2016rearrangement}) are all examples of weakly cell-transitive rearrangement groups.
Taking $T_A$ as a model, it is easy to argue that for every finite union of cells there exists a rearrangement that maps it into one of the four cells of the base graph.
It can also be seen that there are three orbits of cells (red cells, ``external'' blue cells and ``internal'' blue cells) and that every cell contains an element of each orbit.
Combining the two, one can immediately conclude that $T_A$ is weakly cell-transitive, and a similar argument can be applied to $T_B$, $[T_A, T_A]$, $[T_B, T_B]$ and the Vicsek rearrangement group.

Thompson groups $T$ and $V$ are weakly cell-transitive too, and in truth their actions are actually transitive on the set of cells (standard dyadic intervals).
The same holds for Higman-Thompson groups $T_{r,n}$ and $V_{r,n}$.

As a counterexample, Thompson group $F$ is instead not weakly cell-transitive, since a union of cells containing an endpoint of $[0,1]$ cannot be mapped inside a cell that does not contain said endpoint.
Indeed, in \cite{Gelander2016InvariableGO} it has been shown that $F$ is invariably generated.
For the same reasons, Higman-Thompson groups $F_{r,n}$ are not weakly cell-transitive, although whether these groups are invariably generated is yet to be investigated.

\subsection{Other classes of actions by homeomorphisms}

In this Subsection we explore the relationship between weakly cell-transitive actions and other classes of group actions on topological spaces.

Recall that an action $G \curvearrowright X$ is said to be \textbf{minimal} if the orbit of each point of $X$ is dense in $X$.

\begin{remark}
\label{RMK density}
A subset $D$ of a limit space $X$ is dense in $X$ if and only if it has non-trivial intersection with each cell of $X$.
Indeed, the topological interior of each cell is non-empty, so if $D$ is dense then it must intersect each cell non-trivially.
Conversely, if $A$ is an open subset of $X$, then it must contain some ball, which in turn must contain some cell by \cref{LEM balls cells};
therefore, if $D$ intersects each cell non-trivially then it intersects each open set non-trivially.
\end{remark}

\begin{proposition}
\label{PROP rearrangement orbit dense}
For a subgroup $G$ of a rearrangement group $G_\mathcal{X}$, the following statements are equivalent:
\begin{enumerate}
    \item $G$ is \hyperref[DEF rearrangement transitive]{weakly cell-transitive};
    \item $G$ is \textbf{flexible} (in the sense of \cite{flexible}, although the term was first used in \cite{Belk2019OnTA}), i.e., the group acts on a compact Hausdorff $X$ in such a way that, for any two proper closed sets of $X$ with non-empty topological interior, there is an element that maps one inside the other;
    \item $G$ is \textbf{CO-transitive} (compact-open transitive, \cite{CO-compact}), i.e., for each proper compact $K$ and each non-empty open $U$, there is an element that maps $K$ inside $U$.
\end{enumerate}
Moreover, if $G_\mathcal{X}$ is weakly cell-transitive, then the action of $G_\mathcal{X}$ on the limit space $X$ is minimal.
\end{proposition}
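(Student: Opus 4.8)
The plan is to establish the cycle of implications $(1) \Rightarrow (3) \Rightarrow (2) \Rightarrow (1)$ and then to deduce minimality from $(1)$. The backbone of the argument is the observation that cells behave well as closed sets: each cell is compact (being itself homeomorphic to a limit space, which is compact by Theorem 1.25 of \cite{belk2016rearrangement}) and hence closed in the metrizable space $X$, and it has non-empty interior by \cref{RMK interior}. Consequently any finite union of cells is compact, closed, and has non-empty interior. Together with \cref{RMK balls cells}, which lets us pass freely between balls and cells, these facts let us compare the two ``continuous'' conditions $(2)$ and $(3)$ with the ``combinatorial'' condition $(1)$.

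The implications $(3) \Rightarrow (2)$ and $(2) \Rightarrow (1)$ are then immediate. For $(3) \Rightarrow (2)$, given proper closed sets $F_1, F_2$ with non-empty interior, note that $F_1$ is compact (a closed subset of the compact space $X$) while $\mathring{F_2}$ is non-empty and open, so CO-transitivity yields a $g$ with $g(F_1) \subseteq \mathring{F_2} \subseteq F_2$. For $(2) \Rightarrow (1)$, let $C$ be a cell and $A$ a proper union of finitely many cells. If $C = X$ the claim is trivial; otherwise both $A$ and $C$ are proper closed sets with non-empty interior ($A$ contains a cell, which has non-empty interior), and flexibility directly produces a $g$ with $g(A) \subseteq C$.

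The implication $(1) \Rightarrow (3)$ is where the real work lies, and I expect it to be the main obstacle. Given a proper compact $K$ and a non-empty open $U$, the target is handled by \cref{RMK balls cells}: $U$ contains a cell $C$. The difficulty is to enclose $K$ in a \emph{proper} finite union of cells so that weak cell-transitivity can be applied. Since $K$ is proper, pick $p \in X \setminus K$; as $K$ is closed, a small enough ball around $p$ misses $K$, and by \cref{RMK balls cells} this ball contains a cell $C_p$ with $K \cap C_p = \varnothing$. Now $C_p$ is one of the cells of some cellular partition $\mathcal{P}$ (the expansion it comes from), and I set $A := \bigcup_{C' \in \mathcal{P},\, C' \neq C_p} C'$. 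Because $\mathcal{P}$ covers $X$ and $K$ avoids $C_p$, we get $K \subseteq A$; and $A$ is proper because $\mathring{C_p}$ is non-empty and disjoint from $A$ (the interiors of the cells of a cellular partition are pairwise disjoint by \cref{DEF cellular partition}, and each cell's interior is dense in that cell by \cref{RMK interior}, so no interior point of $C_p$ can lie in another cell). Thus $A$ is a proper union of finitely many cells containing $K$, and weak cell-transitivity gives a $g$ with $g(K) \subseteq g(A) \subseteq C \subseteq U$.

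Finally, for the minimality addendum I would invoke \cref{RMK density}: an orbit $G \cdot p$ is dense exactly when it meets every cell. Fix $p \in X$ and a cell $C$. Every point lies in some proper cell $C_0$ (the cell determined by a sufficiently long prefix of an address of $p$), which is in particular a proper union of finitely many cells; weak cell-transitivity then provides a $g$ with $g(C_0) \subseteq C$, whence $g(p) \in C$. Since $C$ was an arbitrary cell, $G \cdot p$ meets every cell and is therefore dense by \cref{RMK density}, so the action of $G_\mathcal{X}$ on $X$ is minimal.
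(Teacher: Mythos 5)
Your proof is correct and follows essentially the same route as the paper: the key move in both is to enclose the ``source'' set in the complement of a cell's interior, which is a proper finite union of cells of a cellular partition, and the minimality argument is identical. The only difference is organizational (you prove the cycle $(1)\Rightarrow(3)\Rightarrow(2)\Rightarrow(1)$, while the paper proves $(1)\Leftrightarrow(2)$ and notes that $(1)\Leftrightarrow(3)$ is analogous), plus some extra care on minor points (e.g.\ the case $C=X$) that the paper glosses over.
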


\begin{proof}
We will prove that (1) is equivalent to (2). One can prove the equivalence between (1) and (3) essentially in the same way.
To prove this we will use the fact that each cell contains balls of radius greater than zero (\cref{LEM balls cells}).
First, it is known that the limit space of an \hyperref[TXT expanding]{expanding} replacement system is compact and Hausdorff (Theorem 1.25 of \cite{belk2016rearrangement}).
If $G$ is flexible, $C$ is a cell and $A$ is a proper union of finitely many cells, then $C$ and $A$ are proper closed subsets of the limit space $X$ with non-empty topological interior, so we are done.
Conversely, consider two proper closed subsets $E_1$ and $E_2$ of $X$ with non-empty topological interior.
Since $E_1^\mathsf{C}$ is proper, open and nonempty, it must contain some proper topological interior $\mathring{C_0}$ of a cell $C_{0}$, so $E_1$ is contained in $\mathring{C_0}^\mathsf{C}$.
This is the union of finitely many cells since, for every \hyperref[DEF cellular partition]{cellular partition} containing $C_0$, by \cref{RMK interior} the complement of $\mathring{C_0}$ is the union of every other cell of the partition.
Also, $E_2$ contains $\mathring{E_2}$, which is non-empty, so it contains some cell $C$.
Then, since $G$ is weakly cell-transitive, there exists some $g \in G$ such that $g(\mathring{C_0}^\mathsf{C}) \subseteq C$, thus $g(E_1) \subseteq g(\mathring{C_0}^\mathsf{C}) \subseteq C \subseteq E_2$.

Finally, suppose that $G$ is weakly cell-transitive.
In order to show that the action is minimal, let $p$ be a point of the limit space $X$ and consider a cell $A$ that contains $p$.
By hypothesis, for any cell $C$ there exists a $g \in G$ such that $g(A) \subseteq C$.
Then $g(p) \in C$, so the orbit of $p$ intersects $C$ non-trivially, and by \cref{RMK density} we are done.
\end{proof}

Additionally, by ``forgetting'' the gluing of edges one obtains a totally disconnected space that is homeomorphic to the Cantor set.
Since the action of a rearrangement group is defined on cells, it induces an action on the cones of the Cantor set.
If a subgroup of a rearrangement group is \textbf{vigorous} (\cite{vigorous}) with this action (i.e., for any clopen subset $A$ of the Cantor set, for any two clopen proper subsets $B$ and $C$ of $A$, there is a $g$ in the pointwise stabilizer of the complement of $A$ that maps $B$ inside $C$), then it is weakly cell-transitive.
Indeed, take $A$ as the entire Cantor set: cells correspond to ``glued'' cones, which are clopen.

\begin{figure}
\centering
\begin{subfigure}{.45\textwidth}
\centering
\begin{tikzpicture}
    \draw[->-=.5] (0,0) circle (.75);
    \node at (.75,0) [circle,fill,inner sep=1.5]{};
    
    \draw[->-=.5] (2,0) circle (.75);
    \node at (2.75,0) [circle,fill,inner sep=1.5]{};
\end{tikzpicture}
\caption{The base graph $\Gamma$.}
\end{subfigure}
\begin{subfigure}{.45\textwidth}
\centering
\begin{tikzpicture}
    \draw[-angle 60] (0,0) node[circle,fill,inner sep=1.25]{} node[left]{$v_i$} -- (0,1); \draw (0,1) -- (0,2) node[circle,fill,inner sep=1.25]{} node[left]{$v_t$};
    
    \draw[-stealth] (0.4,1) -- (0.8,1);
    
    \draw[-angle 60] (1.2,0) node[circle,fill,inner sep=1.25]{} node[left]{$v_i$} -- (1.2,0.5); \draw (1.2,0.5) -- (1.2,1) node[circle,fill,inner sep=1.25]{}; \draw[-angle 60] (1.2,1) -- (1.2,1.5); \draw (1.2,1.5) -- (1.2,2) node[circle,fill,inner sep=1.25]{} node[left]{$v_t$};
\end{tikzpicture}
\caption{The replacement rule $e \to R$.}
\end{subfigure}
\caption{A replacement system whose rearrangement group has minimal action but is not weakly cell-transitive.}
\label{FIG double T}
\end{figure}
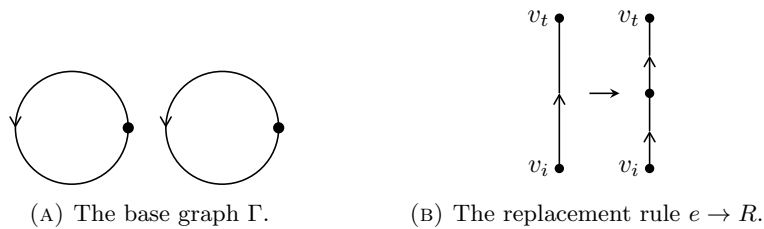

A necessary condition for weak cell-transitivity is that each replacement graph has an expansion that features edges of every type.
The \textit{types} of edges that we refer to are those obtained by partitioning the set of all edges that appear in the expansions of the base graph based on their colors and on whether they are loops or not.
In truth, this condition is very broad, as it is also a necessary condition for the minimality of the rearrangement group.
Indeed, if the replacement graph $R_c$ does not feature edges of a certain type among its expansions, then any point included in some edge of that type cannot be mapped inside a $c$-colored cell.
In general, finding a fully combinatorial necessary or sufficient condition for weak cell-transivitiy seems unlikely, as even replacement systems that do not seem to have special properties may end up producing trivial rearrangement groups (see \cite[Example 2.5]{belk2016rearrangement}).

An example of a rearrangement group whose action is minimal but not weakly cell-transitive is given by the replacement system of \cref{FIG double T}.
Since its base graph is disconnected, it can be seen that its group can be decomposed as the wreath product of $T$ with $S_2$ (the group of permutations of $\{1,2\}$).
In general, it seems that the rearrangement group of every replacement system whose base graph is disconnected can be decomposed in a similar fashion, so it would be interesting to find an example of minimal and not weakly cell-transitive rearrangement group with connected base graph.

\section{Wandering Sets}

As in \cite{Gelander2016InvariableGO}, we introduce the notion of wandering sets, which will be essential in the proof of our \hyperref[THM main]{main Theorem}.

\begin{definition}
Let $G \curvearrowright X$ and $g \in G$. We say that a subset $A \subseteq X$ is:
\begin{itemize}
    \item \textbf{$\bm{g}$-wandering} if, for every $n \in \mathbb{Z}$ such that $g^n \neq 1_G$, we have that $g^n(A) \cap A = \emptyset$;
    \item \textbf{weakly $\bm{g}$-wandering} if, for every $n \in \mathbb{Z}$, either $g^n$ fixes $A$ pointwise or $g^n(A) \cap A = \emptyset$.
\end{itemize} 
\end{definition}

The remainder of this Section is dedicated to proving the following result:

\begin{proposition}
\label{PROP rearrangement wandering}
Let $G_\mathcal{X}$ be rearrangement group. Then each element $g \in G_\mathcal{X}$ admits a weakly wandering cell.
\end{proposition}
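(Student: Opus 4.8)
The plan is to split on whether some power of $g$ fixes a cell pointwise, handling the two regimes by genuinely different constructions. If $g = 1_{G_\mathcal{X}}$ then every cell is trivially weakly $g$-wandering, since every power fixes every cell pointwise, so assume $g \neq 1_{G_\mathcal{X}}$. Throughout I will use freely that every nonempty open set contains a cell and that cells around a point can be taken arbitrarily small (\cref{RMK balls cells}), together with the fact that $X$ is compact, metrizable and Hausdorff.

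First suppose there is an $n \geq 1$ and a cell on which $g^n$ restricts to the identity, and let $d \geq 1$ be the least positive integer such that $g^d$ fixes some cell pointwise; fix a cell $C_0$ with $g^{d}|_{C_0} = \mathrm{id}$. For $1 \leq i \leq d-1$ the set $\mathrm{Fix}(g^i)$ is closed and, by minimality of $d$, has empty interior (otherwise it would contain a cell fixed pointwise by $g^i$, contradicting $i < d$). A finite union of closed sets with empty interior again has empty interior, so these sets cannot cover the nonempty open set $\mathring{C_0}$; pick $x_0 \in \mathring{C_0}$ with $g^i(x_0) \neq x_0$ for all $1 \leq i \leq d-1$. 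By Hausdorffness and continuity, shrinking if necessary, I obtain a cell $C \ni x_0$ with $C \subseteq C_0$ small enough that $g^i(C) \cap C = \emptyset$ for all these finitely many $i$ simultaneously. Writing an arbitrary $n$ as $n = qd + r$ with $0 \leq r < d$ and using that $(g^d)^q$ fixes $C$ pointwise, we get $g^n|_C = g^r|_C$ and $g^n(C) = g^r(C)$. Hence $g^n$ fixes $C$ pointwise when $d \mid n$ and $g^n(C) \cap C = \emptyset$ otherwise, so $C$ is weakly $g$-wandering.

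In the opposite regime no power of $g$ fixes any cell pointwise, so in particular $g$ has infinite order. Here I would pass to the reduced graph pair diagram of $g$ and read the dynamics from its domain and range partitions. The goal is to locate a cell $D$ with $g(D) \subsetneq D$ a \emph{proper} subcell (replacing $g$ by $g^{-1}$ if instead some image strictly contains its domain cell, and passing to a power $g^r$ to dispose of the case where the two partitions coincide as sets and $g$ merely permutes their cells). Such a strict containment produces north--south dynamics: the nested images $g^k(D)$ shrink to a single attracting fixed point (by the vanishing of star diameters, Proposition 1.29 of \cite{belk2016rearrangement}). Choosing a cell $C \subseteq D$ that is genuinely disjoint from $g(D)$ forces $g^k(C) \subseteq g(D)$ for every $k \geq 1$, so $g^k(C) \cap C = \emptyset$; since this also gives disjointness for negative powers, $C$ is $g$-wandering, hence weakly $g$-wandering.

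The main obstacle is twofold. The delicate point in the second regime is the gluing of addresses: distinct cells of an expansion can still meet at a shared gluing vertex, so a naively chosen ``fundamental domain'' cell would satisfy $g(C) \cap C \neq \emptyset$ at a boundary point without $g$ fixing $C$, defeating weak wandering; this is why $C$ must be placed strictly inside the ``annulus'' $D \setminus g(D)$, avoiding the $\langle g \rangle$-orbit of its own boundary gluing vertices, a freedom granted by the ability to shrink cells (\cref{RMK balls cells}) and, if needed, by the minimality of the action (\cref{PROP rearrangement orbit dense}). The conceptual crux, however, is establishing the strict containment $g(D) \subsetneq D$ at all: one must rule out that the reduced diagram only ever permutes a fixed cellular partition without any genuine contraction. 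I expect this to follow from the attractor/repeller structure intrinsic to infinite-order rearrangements, which locally behave like infinite-order elements of Higman--Thompson $V$ and hence always possess an attracting periodic cell; making this precise for an arbitrary expanding replacement system is the part I anticipate will require the most care.
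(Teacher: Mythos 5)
Your first regime is sound and is essentially the paper's treatment of periodic elements (\cref{LEM periodic wandering}) with its key input assumed rather than proved: the paper produces a cell fixed pointwise by a power of $g$ via \cref{LEM irrational fix} applied to an irrational fixed point, whereas you take the existence of such a cell as the case hypothesis; your Baire-type choice of $x_0$ and the division $n = qd + r$ then reproduce the paper's computation, and the case split is exhaustive since every periodic $g$ falls into this regime.

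The second regime contains a genuine gap, and it sits exactly where you place the ``conceptual crux''. The statement you aim for --- a cell $D$ with $g(D) \subsetneq D$, after possibly replacing $g$ by $g^{-1}$ and discarding pure permutations of a common partition --- is false in general, because the attracting periodic cells of an infinite-order rearrangement can all have period strictly greater than $1$. Already in Thompson's $V$: let $g$ send $000w \mapsto 10w$, $001w \mapsto 110w$, $01w \mapsto 111w$, $1w \mapsto 0w$. This $g$ has infinite order and no power of it fixes a cell pointwise (it interchanges the halves $C(0)$ and $C(1)$, and $g^2$ restricts on each half to a conjugate of the standard generator $x_0$ of $F$); since every proper cell lies in one of the two halves and $g$ swaps them, $g(D)$ is disjoint from $D$ for every proper cell $D$, and likewise for $g^{-1}$, so no cell is mapped properly into itself by $g$ or by $g^{-1}$. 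The attractors here are genuine $2$-cycles, e.g.\ $C(11) \to C(01) \to C(111) \subsetneq C(11)$. The correct target is the one the paper extracts from its revealing-pair machinery (graph pair diagrams of minimal imbalance and minimal number of components, Lemmas \ref{LEM Brin 1}--\ref{LEM Brin 4}): a cell $C(e)$, an integer $n \geq 1$ and a cell $C(e^*) \subsetneq C(e)$ with $g^n(C(e)) = C(e^*)$ \emph{and} with $\mathring{C}(e), g(\mathring{C}(e)), \dots, g^{n-1}(\mathring{C}(e))$ pairwise disjoint. Note that the disjointness of the intermediate images is an additional nontrivial property that your own wandering argument would also need in order to handle exponents $m \not\equiv 0 \bmod n$ (compare the proof of \cref{LEM non-periodic wandering}), and it does not follow from the mere existence of a periodic attractor. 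As written, the second half of your proof is a plan whose first step must be corrected and whose hard step is deferred, so the proposal does not yet establish the proposition.
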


This is easy to see for monochromatic rearrangement groups, since they embed nicely inside Higman-Thompson groups $V_{r,n}$, for which the results of \cite{Gelander2016InvariableGO} apply with barely any modification.

In order to prove this result in the general setting of polychromatic rearrangement groups, we will need to generalize the results of \cite{Gelander2016InvariableGO}.
We will distinguish between two cases depending on whether the element of $G_\mathcal{X}$ is periodic or not.
Lemmas \ref{LEM periodic wandering} and \ref{LEM non-periodic wandering} contained in the two following Subsections will thus prove \cref{PROP rearrangement wandering}.

Before we prove \cref{PROP rearrangement wandering}, we point out the following consequence:

\begin{corollary}
Each weakly cell-transitive subgroup of a rearrangement group contains a copy of the free group on two generators, and in particular it is not amenable.
\end{corollary}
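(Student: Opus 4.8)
The plan is to exhibit a copy of the free group of rank two inside $G$ by means of the Ping-Pong Lemma; non-amenability is then immediate, since amenability passes to subgroups and a group containing a non-abelian free subgroup cannot be amenable. The whole argument rests on turning \hyperref[DEF rearrangement transitive]{weak cell-transitivity} into two ``contracting'' elements with disjoint ping-pong regions.

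First I would produce four pairwise disjoint cells $C_0, C_1, D_0, D_1$. Since the limit space $X$ is uncountable and metrizable, I can choose four distinct points and four pairwise disjoint open balls around them; by \cref{RMK balls cells} each ball contains a cell, and these four cells, being closed sets contained in pairwise disjoint balls, are themselves pairwise disjoint (not merely disjoint in their interiors). Next I build the contractions. Set $V = X \setminus \mathring{C_0}$, which by \cref{RMK interior} is a proper union of finitely many cells, and note $C_1 \subseteq V$ because $C_1 \cap \mathring{C_0} = \emptyset$. Weak cell-transitivity yields $a \in G$ with $a(V) \subseteq C_1 \subseteq V$, so by monotonicity $a^n(V) \subseteq C_1$ for all $n \ge 1$. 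Passing to inverses gives $a^{-1}(X \setminus C_1) \subseteq \mathring{C_0}$, and since $\mathring{C_0} \subseteq X \setminus C_1$ (as $C_0 \cap C_1 = \emptyset$) the same monotonicity yields $a^{-n}(X \setminus C_1) \subseteq C_0$ for all $n \ge 1$. Symmetrically, applying weak cell-transitivity with $D_0, D_1$ produces $b \in G$ with $b^n(X \setminus \mathring{D_0}) \subseteq D_1$ and $b^{-n}(X \setminus D_1) \subseteq D_0$ for all $n \ge 1$.

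With these in hand I would set the ping-pong sets $X_+ = C_1$, $X_- = C_0$, $Y_+ = D_1$, $Y_- = D_0$, which are pairwise disjoint and nonempty. Since $D_0$ and $D_1$ are disjoint from both $C_0$ and $C_1$, we have $Y_+ \cup Y_- \subseteq (X \setminus \mathring{C_0}) \cap (X \setminus C_1)$, so $a^n(Y_+ \cup Y_-) \subseteq X_+$ for $n \ge 1$ and $\subseteq X_-$ for $n \le -1$; symmetrically $b^n(X_+ \cup X_-) \subseteq Y_+$ for $n \ge 1$ and $\subseteq Y_-$ for $n \le -1$. The Ping-Pong Lemma then gives $\langle a, b \rangle \cong \langle a \rangle * \langle b \rangle$, a free product of two infinite cyclic groups (infinite because $a^n(V) \subseteq C_1 \subsetneq V$ for $n \ge 1$, the inclusion being strict as $D_0 \subseteq V$ while $D_0 \cap C_1 = \emptyset$), hence a free group of rank two. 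Therefore $G$ contains a copy of $F_2$ and is not amenable. The only delicate point, and what I view as the main obstacle, is arranging all the power inclusions at once: the key is that weak cell-transitivity supplies a single self-nesting contraction $a(V) \subseteq C_1 \subseteq V$, after which nestedness propagates to every power by monotonicity, so one never has to control the individual iterates by hand; the disjointness of the four closed cells, secured by the ball argument, is exactly the hypothesis the ping-pong sets demand.
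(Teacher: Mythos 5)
Your argument is correct, but it reaches the conclusion by a genuinely different route from the one the paper indicates. The paper dispatches this corollary in a single line, by combining \cref{COR rearrangement conj} (every finite union of cells becomes weakly wandering for a suitable conjugate of any given element) with the ping-pong lemma; that route therefore rests on the wandering-cell machinery of \cref{PROP rearrangement wandering}, which is the technical core of the paper. You instead manufacture the two ping-pong players directly from \cref{DEF rearrangement transitive}: the self-nesting inclusion $a(X \setminus \mathring{C_0}) \subseteq C_1 \subseteq X \setminus \mathring{C_0}$ controls all positive powers at once and, after inversion, all negative ones, and the four pairwise disjoint cells obtained from \cref{RMK balls cells} furnish the ping-pong tables. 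The only external facts you use are that the complement of a cell's interior is a proper finite union of cells (which the paper establishes inside the proof of \cref{PROP rearrangement orbit dense}) and that the limit space contains disjoint cells; in particular your proof makes the corollary available before, and independently of, the wandering-set section, whereas the paper's route buys brevity because those results are already in place when the corollary is stated. Your verification that $a$ and $b$ have infinite order is also exactly what is needed for the two-subgroup form of the ping-pong lemma (which requires one factor of order at least $3$), so the conclusion $\langle a, b\rangle \cong F_2$, and hence non-amenability, stands.
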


This can be easily proved using \cref{COR rearrangement conj} and the ping-pong Lemma.

\subsection{Periodic Elements}

The Lemma below is a generalization of a property of Thompson groups $F$, $T$ and $V$: if an element $g$ fixes some irrational point $\alpha$ of $[0,1]$, then it must fix some small enough neighborhood of $\alpha$.

In the context of limit spaces of replacement systems, we say that a point $p$ is \textbf{irrational} if it has a unique non-periodic \hyperref[TXT gluing relation]{address}.
Observe that irrational points always exist, and in truth the following holds.

\begin{remark}
\label{RMK irrational dense}
The set of irrational points of a limit space is dense.

Indeed, each cell has countably many points with more than one address, as they must be vertices of some expansion of that cell, and it has countably many points with eventually periodic address.
Since \hyperref[TXT gluing relation]{each cell is uncountable}, we can conclude that it contains uncountably many irrational points.
In particular, because of \cref{RMK density}, the set of irrational points is dense.
\end{remark}

\begin{lemma}
\label{LEM irrational fix}
If a rearrangement $g$ fixes an irrational point $p$ of $X$, then there exists a small enough cell $C$ containing $p$ that is fixed by $g$ pointwise.
\end{lemma}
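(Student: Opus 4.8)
The plan is to work directly with the unique address of $p$ and with the canonical homeomorphism that $g$ induces near $p$. First I would fix a graph pair diagram representing $g$, with domain cellular partition $\mathcal{D}$. Since $p$ is irrational it has a unique address $\omega$, and in particular it is an \emph{extreme}, so by \cref{RMK interior} it is contained in a unique edge of each expansion; hence $p$ lies in exactly one cell $C_D \in \mathcal{D}$, whose generating edge has some finite address $\alpha$ that is a prefix of $\omega$, say $\omega = \alpha\tau$.

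By the definition of rearrangement, $g$ restricts on $C_D$ to a canonical homeomorphism onto some cell $C_R$ with generating-edge address $\beta$; this map is a prefix exchange, sending the point $[\alpha\sigma]$ to $[\beta\sigma]$ for every tail $\sigma$. In particular $g(p) = [\beta\tau]$. Because $g(p) = p$ and $p$ is irrational, $p$ has the single address $\omega$, so the address $\beta\tau$ of $g(p)$ must coincide with $\omega = \alpha\tau$. Thus $\beta\tau = \alpha\tau$ as infinite addresses.

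Next I would compare the lengths of the finite words $\alpha$ and $\beta$. If $|\alpha| = |\beta|$ then $\beta\tau = \alpha\tau$ forces $\alpha = \beta$, so $C_R = C_D$ and the canonical homeomorphism $g|_{C_D}$ is the identity; taking $C = C_D$ (or any subcell of it containing $p$) finishes the proof. The remaining possibility, say $|\alpha| < |\beta|$ (the case $|\alpha| > |\beta|$ is symmetric), is where the hypothesis of irrationality does the real work: here $\beta = \alpha\gamma$ for a nonempty finite word $\gamma$, and cancelling $\alpha$ from $\beta\tau = \alpha\tau$ yields $\gamma\tau = \tau$. This says $\tau = \gamma\gamma\gamma\cdots$ is purely periodic, whence $\omega = \alpha\tau$ is eventually periodic, contradicting the assumption that the unique address $\omega$ of the irrational point $p$ is non-periodic.

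I expect the length-mismatch case to be the main point of the argument: ruling it out is exactly where irrationality is used, and it is the step that explains why the statement can fail for rational (eventually periodic) fixed points. The more routine ingredients are identifying the unique cell of $\mathcal{D}$ that contains $p$ and describing $g$ there as a prefix exchange, both of which follow from the address formalism recalled in \cref{section_definitions} together with \cref{RMK interior}.
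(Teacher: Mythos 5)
Your proposal is correct and follows essentially the same route as the paper: identify the unique cell of the domain partition containing $p$ (using that $p$ has a unique address), write $g$ there as a prefix exchange, and observe that a length mismatch between the domain and range prefixes would force the tail of the address to be periodic, contradicting irrationality. You spell out the prefix-length dichotomy and the concluding identification $C_R = C_D$ slightly more explicitly than the paper does, but the argument is the same.
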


\begin{proof}
Suppose $p$ is an irrational point, and let $e_1 e_2 \cdots$ be its unique address, which must be non-periodic.
Let $(D,R,\sigma)$ be a graph pair diagram for $g$.
Since the point $p$ is irrational, it is included in a unique cell $C_D = C(e_1 \cdots e_n)$ of $D$, which $g$ maps canonically to some cell $C_R$ of $R$.
Since $p$ has a unique address and is contained in $C_R$, we must have $C_R = C(e_1 \cdots e_m)$ for some $m$.
Now, $(D,R,\sigma)$ is a graph pair diagram for $g$, so the action on the cells of $D$ is canonical, meaning that the effect of $g$ on the points represented by addresses $e_1 \cdots e_n w$ is the replacement of the prefix $e_1 \cdots e_n$ with $e_1 \cdots e_m$.
In particular, $p = g(p)$ is represented by the addresses $e_1 e_2 \cdots e_n e_{n+1} \cdots$ and $e_1 \cdots e_m e_{n+1} \cdots$.
Since $p$ has unique address, this implies that $e_1 \cdots e_n e_{n+1} \cdots$ is the same as $e_1 \cdots e_m e_{n+1} \cdots$.
This is only possible if either $e_1 \cdots e_n = e_1 \cdots e_m$ or $e_{n+1} e_{n+2} \cdots$ is periodic, but the latter is impossible since $p$ is non-periodic.
\end{proof}

\begin{lemma}
\label{LEM periodic wandering}
If $g \in G_\mathcal{X}$ is periodic, then there exists a weakly $g$-wandering cell.
\end{lemma}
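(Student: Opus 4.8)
The plan is to exploit the finiteness of the order of $g$ together with \cref{LEM irrational fix}. Write $k$ for the order of $g$, so that $g^k = 1_G$; since $g^n$ depends only on $n \bmod k$, it is enough to check the weakly $g$-wandering condition for the finitely many exponents $n \in \{0, 1, \dots, k-1\}$, the case $n = 0$ being immediate. I would begin by fixing, once and for all, an irrational point $p$ of $X$ (which exists by \cref{RMK irrational dense}) and writing $e_1 e_2 \cdots$ for its unique non-periodic \hyperref[TXT gluing relation]{address}. Because $p$ has a unique address, the cells containing $p$ are exactly the nested sequence $C_m := C(e_1 \cdots e_m)$, and $\bigcap_{m} C_m = \{p\}$. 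Being a decreasing sequence of closed subsets of the compact metrizable space $X$ with singleton intersection, the cells $C_m$ satisfy $\operatorname{diam}(C_m) \to 0$ with respect to any fixed compatible metric $d$.

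Next I would split the relevant exponents according to the behaviour of $p$. For each $n \in \{1, \dots, k-1\}$ with $g^n(p) = p$, the power $g^n$ fixes the irrational point $p$, so \cref{LEM irrational fix} yields a cell containing $p$ --- necessarily one of the $C_m$, say $C_{m_n}$ --- that $g^n$ fixes pointwise; as the $C_m$ are nested, $g^n$ then fixes $C_m$ pointwise for every $m \ge m_n$. For each $n \in \{1, \dots, k-1\}$ with $g^n(p) \neq p$, I would instead use that $X$ is Hausdorff: setting $\delta_n := \tfrac13 d(p, g^n(p)) > 0$ and combining the continuity of $g^n$ with $\operatorname{diam}(C_m) \to 0$, one obtains an index $m'_n$ such that, for all $m \ge m'_n$, both $C_m \subseteq B(p, \delta_n)$ and $g^n(C_m) \subseteq B(g^n(p), \delta_n)$; the two balls are disjoint, hence $g^n(C_m) \cap C_m = \emptyset$.

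Finally I would choose $m$ exceeding all of the finitely many indices $m_n$ and $m'_n$ produced above and set $A := C_m$. By construction, for every $n \in \{0, \dots, k-1\}$ --- and therefore, by periodicity, for every $n \in \mathbb{Z}$ --- the power $g^n$ either fixes $A$ pointwise (when $g^n(p) = p$) or satisfies $g^n(A) \cap A = \emptyset$ (when $g^n(p) \neq p$), which is precisely the requirement for $A$ to be weakly $g$-wandering. The one point requiring care, and the only real obstacle, is that a single cell must serve simultaneously for all powers of $g$; this is exactly what the nestedness of the cells $C_m$ around the irrational point $p$, together with $\operatorname{diam}(C_m) \to 0$, lets me overcome, by passing to one common sufficiently small cell. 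It is worth noting that, in contrast with the non-periodic case, this argument relies only on the existence of an irrational point and on \cref{LEM irrational fix}, and not on weak cell-transitivity.
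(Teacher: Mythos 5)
Your argument is correct and follows essentially the same route as the paper's proof: fix an irrational point, apply \cref{LEM irrational fix} to the powers of $g$ that fix it, use Hausdorff separation for the powers that move it, and pass to a sufficiently small cell around the point. The paper merely streamlines the bookkeeping by choosing an irrational point $p$ not fixed by $g$, letting $n$ be its period, separating $B, g(B), \dots, g^{n-1}(B)$ with a single ball, invoking \cref{LEM irrational fix} only for $g^n$, and extracting a cell from $B$ via \cref{RMK balls cells}, whereas you treat each residue modulo the order of $g$ separately along the nested sequence of cells containing $p$; the content is the same.
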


\begin{proof}
Suppose $g$ is a non-trivial rearrangement.
Then, because of \cref{RMK irrational dense}, there must exist an irrational point $p$ that is not fixed by $g$.
Let $n$ be the period of $p$ under the action of $g$.
Since $g, g^2, \dots, g^{n-1}$ are homeomorphisms of $X$ and $X$ is a Hausdorff topological space (Theorem 1.25 of \cite{belk2016rearrangement}) there exist a small enough ball $B$ of $p$ such that $g(B), \dots, g^{n-1}(B)$ are disjoint from $B$.
By \cref{LEM irrational fix}, we can assume that $g^n$ fixes $B$ pointwise.
Then, by \cref{LEM balls cells}, there exists a cell contained in $B$, and so that cell is weakly $g$-wandering.
\end{proof}

\subsection{Non-periodic Elements}

Following the strategy of \cite{Gelander2016InvariableGO}, we need to gather some results about the dynamics of rearrangements in order to prove the existence of wandering cells for non-periodic elements.
The following Lemmas are generalizations of Lemmas 10.2, 10.3 and 10.5 of \cite{Brin2004HigherDT}, along with the general structure of its Section 10, which is about \textit{revealing pairs} of trees for elements of $V$.

Observe that, following the same idea of addresses described at \cpageref{TXT gluing relation}, if $E$ is an expansion of the base graph $\Gamma_0$, then we can represent the expansions performed to obtain $E$ from $\Gamma_0$ as a forest $F_E$, where each root represents an edge of $\Gamma_0$ and each caret represents an edge expansion.
By \textit{caret} here we mean an interior node $\nu$ of the forest along with all of its children, the amount of which depends on the color of $\nu$ (in fact, we may have non-regular rooted forests when dealing with polychromatic replacement systems).
It is clear that each simple expansion of $E$ corresponds uniquely to appending a specific tree, which depends on the color of the edge being expanded, to the forest $F_E$ at the expanded edge.

Consider a graph pair diagram $(D,R,\sigma)$ and denote by $F_D$ and $F_R$ the forests corresponding to $D$ and $R$, respectively.
We can compute $F_D - F_R$ (and $F_R - F_D$), by which we mean the set of carets that belong to $F_D$ but not to $F_R$.
These correspond one-to-one to expansions performed in $D$ that have not been performed in $R$.
We call \textbf{domain imbalance} the number of carets of $F_D - F_R$.

\begin{remark}
\label{RMK imbalances}
For polychromatic replacement systems, this need not be the same as the number of carets of $F_R - F_D$, which we could call \textit{range imbalance}.
This is no issue since, among all of the representatives for the same rearrangement, domain imbalance and range imbalance always differ by the same number.

Indeed, representatives for the same rearrangements differ by sequences of simple expansions or reductions of domain and range, so it suffices to see how imbalances change when going from $(D,R,\sigma)$ to a simple expansion $(D \triangleleft e, R \triangleleft \sigma(e), \sigma')$, which consists of appending a caret $U$ at $e$ in $F_D$ and the same caret $U$ at $\sigma(e)$ in $F_R$.
If $e = \sigma(e)$, clearly the imbalances remain the same, as the added carets cancel out.
If $e \neq \sigma(e)$, we distinguish cases based on whether $e$ is an internal node of $R$ and whether $\sigma(e)$ is an internal node of $D$:
\begin{enumerate}
    \item if both happen, then the simple expansion decreases domain imbalance and range imbalance by one;
    \item if only one happens, then both the domain and the range imbalance are left unchanged by the simple expansion;
    \item if neither of the two happen, then the simple expansion increases both domain and range imbalance by one.
\end{enumerate}
In all of these cases, domain and range imbalance change by the same amount.
\end{remark}

When computing $F_D - F_R$, we can also consider its \textbf{components}, by which we mean the pairwise disjoint ``trees'' in which $F_D$ breaks up after carets of $F_R$ have been removed.
Each component represents a maximal set of expansions that are ``not independent'', by which we mean expansions of edges $e_1, \dots, e_k$ where $D = E \triangleleft e_1 \triangleleft \dots \triangleleft e_k$ such that, for $i \geq 2$, each $e_i$ is not an edge of $E$ but is instead created by subsequent expansions.

\phantomsection\label{TXT minimality choice}
Assume that $(D,R,\sigma)$ is a graph pair diagram with the least domain imbalance of all of the representatives of the same rearrangement (which is the same as assuming it has least range imbalance, by \cref{RMK imbalances}).
Among the representatives with minimal domain imbalance, we choose one that has the smallest number of components of $F_D-F_R$.
Among all such representatives, we choose one that has the smallest number of components of $F_R-F_D$.
In the remainder of this Subsection we will prove facts about graph pair diagrams $(D,R,\sigma)$ chosen in this manner.

Before we start with the first Lemma, consider a sequence $u_1, \dots, u_n$ of edges of $D$ such that $u_1, \dots, u_n$ and $\sigma(u_n)$ are pairwise distinct and such that $u_{i+1} = \sigma(u_i)$ for all $1 \leq i < n$.
We call this an \textbf{expandable sequence} of edges of $D$ (leaves of $F_D$).
Observe that, in particular, these edges must have the same color, and that $u_2, \dots, u_n, \sigma(u_n)$ must be edges of $R$.
We can then perform the same expansion (not necessarily a simple one) on each of $u_1, \dots, u_n$ in $D$ and each of $\sigma(u_1), \dots, \sigma(u_n)$ in $R$, obtaining a new graph pair diagram that represents the same rearrangement.
We call this operation an \textbf{iterated expansion}.
Observe that, when looking at the pair of forests $F_D$ and $F_R$, this operation corresponds exactly to attaching the same subtree (representing the expansion) to the leaves $u_1, \dots, u_n$ in $D$ and to the leaves $\sigma(u_1), \dots, \sigma(u_n)$ in $R$.

\begin{lemma}
\label{LEM Brin 1}
There is no expandable sequence $u_1, \dots, u_n$ such that $u_1$ is an interior node of $F_R$ and $\sigma(u_n)$ is an interior node of $F_D$.
\end{lemma}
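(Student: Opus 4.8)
The plan is to argue by contradiction, exploiting only the minimality of the domain imbalance of $(D,R,\sigma)$ (chosen as on \cpageref{TXT minimality choice}). Suppose such an expandable sequence $u_1, \dots, u_n$ exists, with $u_1$ an interior node of $F_R$ and $\sigma(u_n)$ an interior node of $F_D$. I would perform the iterated expansion on this sequence using a \emph{single} caret (a simple expansion) of the common color of the $u_i$ at each node, and then show that this strictly decreases the domain imbalance, contradicting the minimal choice of $(D,R,\sigma)$.

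Recall that this iterated expansion attaches the chosen caret at each of $u_1, \dots, u_n$ in $F_D$ and at each of $\sigma(u_1), \dots, \sigma(u_n) = u_2, \dots, u_n, \sigma(u_n)$ in $F_R$. Since $u_1, \dots, u_n$ are edges of $D$ and $u_2, \dots, u_n, \sigma(u_n)$ are edges of $R$, all of these are leaves of the respective forests, and since the definition of expandable sequence makes them pairwise distinct, they are independent positions. Hence the operation is legitimate and produces a graph pair diagram representing the same rearrangement.

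Next I would tally the effect on the imbalances position by position. The edges $u_2, \dots, u_n$ are leaves of both $F_D$ and $F_R$, so the newly attached (identical) carets land in $F_D \cap F_R$ and change neither imbalance. At $u_1$, a leaf of $F_D$ but an interior node of $F_R$, the caret already present in $F_R$ has the same color as $u_1$, hence the same shape as the one we attach; after the expansion it belongs to both forests, so it leaves $F_R - F_D$ and the range imbalance drops by one. Symmetrically, at $\sigma(u_n)$, a leaf of $F_R$ but an interior node of $F_D$, attaching the matching caret in $F_R$ removes the corresponding caret from $F_D - F_R$, so the domain imbalance drops by one. Summing these contributions, both imbalances decrease by exactly one (consistent with \cref{RMK imbalances}), and in particular the domain imbalance of the new representative is smaller, contradicting minimality.

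The one delicate point to justify carefully is that the carets genuinely cancel, i.e.\ that the caret at $u_1$ in $F_R$ (respectively at $\sigma(u_n)$ in $F_D$) is precisely the caret produced by our simple expansion. This rests on the fact that a node's color is intrinsic to its address and is therefore the same whether the node is read off $F_D$ or $F_R$, and that all edges of an expandable sequence share a single color; since the replacement caret is determined by the color, the two carets agree in both shape and position. I expect this matching verification, together with the bookkeeping at the intermediate positions $u_2, \dots, u_n$, to be the only nontrivial step. It is worth noting that this argument uses \emph{only} the minimality of the domain imbalance and not the auxiliary minimality of the number of components, which presumably becomes relevant in the subsequent lemmas.
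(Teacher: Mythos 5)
Your proof is correct and follows essentially the same strategy as the paper's: perform an iterated expansion along the given expandable sequence and derive a contradiction with the minimality of the domain imbalance. The only difference is that the paper expands by the entire component $U$ of $F_D - F_R$ rooted at $\sigma(u_n)$ (and must then observe that the copy of $U$ attached at $u_1$ contributes strictly fewer carets to $F_D - F_R$ than $U$ has, since its root caret is already present in $F_R$), whereas your single-caret expansion gives an exact decrease of one in each imbalance, so your bookkeeping is, if anything, slightly cleaner.
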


\begin{proof}
Suppose by contradiction that the statement does not hold and consider the component $U$ of $F_D - F_R$ with root at $\sigma(u_n)$.
We can perform an iterated expansion by $U$ along $u_1, \dots, u_n$, which causes the following modifications on $F_D - F_R$:
\begin{itemize}
    \item the copy of $U$ appended at $\sigma(u_n)$ is removed;
    \item the copies of $U$ appended to both $F_D$ and $F_R$ at each $u_i$ for $2 \leq i \leq n$ do not contribute to $F_D - F_R$, as they cancel out;
    \item since $u_1$ is an interior node of $F_R$, the number of carets added to $F_D$ at $u_1$ from the copy of $U$ is strictly less than the number of carets of $U$.
\end{itemize}
This would lower the domain imbalance, which is not possible by our \hyperref[TXT minimality choice]{choice} of minimality of domain imbalance.
\end{proof}

\begin{lemma}
\label{LEM Brin 2}
There is no expandable sequence $u_1, \dots, u_n$ such that:
\begin{enumerate}
    \item $u_1$ is not a node of $F_R$,
    \item $\sigma(u_n)$ is an interior node of $F_D$,
    \item the component of $F_D - F_R$ rooted at $\sigma(u_n)$ and the one containing $u_1$ are not the same component.
\end{enumerate}
\end{lemma}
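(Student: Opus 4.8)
The plan is to follow the template of \cref{LEM Brin 1}: assuming such an expandable sequence exists, I would perform a suitable iterated expansion and show that it produces a representative of the same rearrangement which still has minimal domain imbalance but strictly fewer components of $F_D - F_R$, contradicting the second clause of the \hyperref[TXT minimality choice]{minimality choice}. The key difference from the previous Lemma is that here the iterated expansion will leave the domain imbalance \emph{unchanged}, so the contradiction must come from the component count rather than from the imbalance.

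Concretely, suppose for contradiction that $u_1, \dots, u_n$ is such an expandable sequence. Because $\sigma(u_n)$ is a leaf of $F_R$ but, by hypothesis (2), an interior node of $F_D$, the component $U$ of $F_D - F_R$ rooted at $\sigma(u_n)$ is non-empty. By hypothesis (1), $u_1$ is not a node of $F_R$ and, being a leaf of $F_D$, it is a leaf of a unique component $V$ of $F_D - F_R$; by hypothesis (3), $U \ne V$. I would then carry out the iterated expansion by the subtree $U$ along $u_1, \dots, u_n$, which is legitimate since $\sigma(u_n)$ and all the $u_i$ share the color of the root of $U$.

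Tracking the effect on $F_D - F_R$, there are three local contributions. At $\sigma(u_n)$ the copy of $U$ appended in $F_R$ cancels the component $U$, removing its $|U|$ carets; at each $u_i$ with $2 \le i \le n$ the copies appended to $F_D$ and to $F_R$ cancel, since these edges are leaves of both forests; and at $u_1$ the copy of $U$ appended in $F_D$ contributes all of its $|U|$ carets to $F_D - F_R$, as $u_1$ is not a node of $F_R$. Thus the domain imbalance is unchanged, so minimality of the domain imbalance is preserved. For the component count, the copy of $U$ grafted below the leaf $u_1$ of $V$ merely enlarges $V$ into a single larger component, while the component $U$ rooted at $\sigma(u_n)$ vanishes entirely; since $U \ne V$, the number of components of $F_D - F_R$ drops by one, which is the desired contradiction.

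I expect the bookkeeping of the component count to be the only delicate point: one must check that grafting $U$ below $u_1$ keeps $V$ connected (rather than spawning a new component) and that the cancellations at $u_2, \dots, u_n$ leave every other component of $F_D - F_R$ untouched. This is exactly where hypotheses (2) and (3) enter—the former ensuring that $U$ is a genuine non-empty component to be deleted, and the latter ensuring that the component deleted at $\sigma(u_n)$ is not the same as the one enlarged at $u_1$, so that the two modifications do not cancel out in the tally.
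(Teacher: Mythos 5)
Your proposal is correct and follows essentially the same argument as the paper's proof: an iterated expansion by the component $U$ rooted at $\sigma(u_n)$ along the sequence, which leaves the domain imbalance unchanged while deleting $U$ and merely enlarging $V$, contradicting the minimality of the number of components of $F_D - F_R$. The extra bookkeeping you supply (the caret counts at $\sigma(u_n)$, at $u_2,\dots,u_n$, and at $u_1$) matches the paper's reasoning, just spelled out in slightly more detail.
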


\begin{proof}
Suppose by contradiction that there is such an expandable sequence $u_1, \dots, u_n$.
Let $U$ and $V$ respectively be the components of $F_D - F_R$ rooted at $\sigma(u_n)$ and containing $u_1$, and consider an iterated expansion by $U$ along the sequence $u_1, \dots, u_n$.
This iterated expansion does not change the domain imbalance, but it removes the component $U$ from $F_D - F_R$, while introducing no new component to $F_D - F_R$.
Indeed, the subtrees appended in both $F_D$ and $F_R$ to $u_i$ for $2 \leq i \leq n$ cancel out in $F_D - F_R$ and a copy of $U$ is appended at the already existing component $V$.
This is not possible by our \hyperref[TXT minimality choice]{choice} of minimal number of components of $F_D - F_R$ among those with minimal domain imbalance.
\end{proof}

\begin{lemma}
\label{LEM Brin 3}
There is no expandable sequence $u_1, \dots, u_n$ such that:
\begin{enumerate}
    \item $\sigma(u_n)$ is not a node of $F_D$,
    \item $u_1$ is an interior node of $F_R$,
    \item the component of $F_R - F_D$ rooted at $u_1$ and the one containing $\sigma(u_n)$ are not the same component.
\end{enumerate}
\end{lemma}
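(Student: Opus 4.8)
The plan is to mirror the proof of \cref{LEM Brin 2}, exploiting the symmetry between domain and range. Indeed, the present statement is obtained from \cref{LEM Brin 2} by swapping the roles of the two forests $F_D$ and $F_R$ (together with reversing the expandable sequence, i.e., passing to $(R, D, \sigma^{-1})$). Accordingly, where that proof invoked the minimality of the number of components of $F_D - F_R$, here I expect to invoke the minimality of the number of components of $F_R - F_D$, which is the last tie-breaker in our \hyperref[TXT minimality choice]{choice of representative} and the only one not yet used.

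Suppose for contradiction that such an expandable sequence $u_1, \dots, u_n$ exists. Since $u_1$ is an interior node of $F_R$ but a leaf of $F_D$, there is a component $U$ of $F_R - F_D$ rooted at $u_1$; moreover, as $u_1$ is a leaf of $F_D$, nothing below it was expanded in $D$, so the entire $F_R$-subtree hanging from $u_1$ lies in $F_R - F_D$ and is exactly $U$. Likewise, since $\sigma(u_n)$ is an edge of $R$ that is not a node of $F_D$, it lies inside a component $V$ of $F_R - F_D$, with $U \neq V$ by hypothesis. I would then perform the iterated expansion by $U$ along $u_1, \dots, u_n$, appending a copy of $U$ at each $u_i$ in $F_D$ and at each $\sigma(u_i)$ in $F_R$.

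The heart of the argument is to track the effect of this iterated expansion on $F_R - F_D$. At $u_1$, appending $U$ in $F_D$ makes the subtree there coincide with the copy of $U$ already present in $F_R$, so the whole component $U$ disappears from $F_R - F_D$. For $2 \leq i \leq n$, the copies of $U$ attached at $u_i$ in both forests cancel and contribute nothing. Finally, at $\sigma(u_n)$ the copy of $U$ is appended only in $F_R$, which merely enlarges the already existing component $V$ rather than creating a new one. The net effect is thus that the number of components of $F_R - F_D$ drops by exactly one.

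The delicate point — and the step I expect to be the main obstacle — is to verify that this expansion stays within our class of minimal representatives, so that the drop is a genuine contradiction. I must check that $F_D - F_R$ is left completely unchanged (so its number of components is preserved) and that the imbalances do not increase: each appended copy of $U$ either already matches across the two forests or lies entirely in $F_R - F_D$, never contributing a new caret to $F_D - F_R$, and a short count shows the range imbalance is unchanged, whence by \cref{RMK imbalances} so is the domain imbalance. The resulting representative then has the same minimal domain imbalance and the same minimal number of components of $F_D - F_R$, yet strictly fewer components of $F_R - F_D$, contradicting our \hyperref[TXT minimality choice]{choice}.
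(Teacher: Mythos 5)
Your proposal is correct and follows essentially the same route as the paper's own proof: an iterated expansion by the component of $F_R - F_D$ rooted at $u_1$ along $u_1, \dots, u_n$, which deletes that component while the copies cancel at $u_2, \dots, u_n$ and the copy at $\sigma(u_n)$ is absorbed into the already existing component, contradicting the minimality of the number of components of $F_R - F_D$. The only differences are cosmetic (your $U$ and $V$ are swapped relative to the paper's) plus your extra — and correct — verification that the domain imbalance and the components of $F_D - F_R$ are untouched, which the paper asserts more briefly.
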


\begin{proof}
This proof is dual to that of the previous Lemma.
Suppose by contradiction that there is such an expandable sequence $u_1, \dots, u_n$.
Let $U$ and $V$ respectively be the components of $F_R - F_D$ containing $\sigma(u_n)$ and rooted at $u_1$ and consider an iterated expansion by $V$ along the sequence $u_1, \dots, u_n$.
This iterated expansion does not change the domain imbalance, but it removes the component $V$ from $F_R - F_D$, while introducing no new component to $F_R - F_D$.
Indeed, the subtrees appended in both $F_D$ and $F_R$ to $u_i$ for $2 \leq i \leq n$ cancel out in $F_R - F_D$ and a copy of $V$ is appended at the already existing component $U$.
This is not possible by our \hyperref[TXT minimality choice]{choice} of minimal number of components of $F_R - F_D$ among those with minimal number of components of $F_D - F_R$, which in turn were chosen among those with minimal domain imbalance.
Note that the argument works because the number of components of $F_D - F_R$ has not increased, since $\sigma(u_n)$ is not a node of $F_D$ by assumption.
\end{proof}

\begin{lemma}
\label{LEM Brin 4}
\begin{enumerate}
    \item []
    \item For each non-trivial component $U$ of $F_D - F_R$, there is a unique leaf $l = l(U)$ of $U$ such that there is an expandable sequence $u_1, \dots, u_n$ starting at $u_1 = l$ and ending at $\sigma(u_n) = r$, where $r = r(U)$ denotes the root of $U$.
    \item For each non-trivial component $V$ of $F_R - F_D$, there is a unique leaf $l = l(V)$ of $V$ such that there is an expandable sequence $u_1, \dots, u_n$ starting at $u_1 = r$ and ending at $\sigma(u_n) = l$, where $r = r(V)$ denotes the root of $V$.
\end{enumerate}
\end{lemma}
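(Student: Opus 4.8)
The plan is to prove part (1) in full and then obtain part (2) by a dual argument, interchanging the roles of $F_D$ and $F_R$ and reversing the direction in which $\sigma$ is iterated. Everything takes place for the representative $(D,R,\sigma)$ \hyperref[TXT minimality choice]{chosen minimally}, so that \cref{LEM Brin 1,LEM Brin 2,LEM Brin 3} are at our disposal, and I use throughout that $\sigma$ is a bijection from the leaves of $F_D$ onto the leaves of $F_R$.

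First I would record the local picture at the root $r = r(U)$ of a non-trivial component $U$ of $F_D - F_R$: since the caret at $r$ belongs to $F_D$ but not to $F_R$, the node $r$ is an interior node of $F_D$ and a leaf of $F_R$. As $r$ is a leaf of $F_R$, no caret of $F_R$ sits below $r$, so $U$ is exactly the subtree of $F_D$ hanging from $r$; in particular every leaf of $U$ is a leaf of $F_D$ that is \emph{not} a node of $F_R$. This observation is what will eventually single out $l(U)$.

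For \textbf{existence} I would build the expandable sequence backwards from $r$. Put $w_0 = r$ and, as long as the current node is a leaf of $F_R$, set $w_{k+1} = \sigma^{-1}(w_k)$, each of which is a leaf of $F_D$. Injectivity of $\sigma$ together with $r$ not being a leaf of $F_D$ forces the $w_k$ to be pairwise distinct — a coincidence $w_j = w_i$ would propagate forward under $\sigma$ to make $w_0 = r$ a leaf of $F_D$ — so finiteness of the forest stops the process at some $w_m$ that is a leaf of $F_D$ but not a leaf of $F_R$. Reindexing by $u_i = w_{m+1-i}$ gives an expandable sequence with $\sigma(u_n) = r$ and $u_1 = w_m$. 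Since $\sigma(u_n) = r$ is an interior node of $F_D$, \cref{LEM Brin 1} excludes $u_1$ being an interior node of $F_R$, whence $u_1$ is not a node of $F_R$ at all; then \cref{LEM Brin 2} forbids $u_1$ from lying in a component of $F_D - F_R$ different from $U$. Hence $u_1 \in U$, and being a leaf of $F_D$ it is a leaf of $U$; this is the desired $l = l(U)$.

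For \textbf{uniqueness} I would again use injectivity of $\sigma$: any expandable sequence with $\sigma(u_n) = r$ is determined from its final term backwards, so it is a suffix of the canonical sequence $w_m, \dots, w_1$ constructed above. Its interior terms are values of $\sigma$, hence leaves of $F_R$, and by the local picture these cannot be leaves of $U$; therefore the only starting term that is a leaf of $U$ is the terminal $w_m$. Part (2) then follows dually, starting from the root $r(V)$ of a component $V$ of $F_R - F_D$ (a leaf of $F_D$ and an interior node of $F_R$), iterating $\sigma$ \emph{forwards} until the image leaves the leaves of $F_D$, and invoking \cref{LEM Brin 1} and \cref{LEM Brin 3}. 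I expect the main difficulty to be bookkeeping rather than conceptual: one must align the stopping condition of the iteration precisely with the hypotheses of the Brin-type lemmas (``interior node of'' versus ``not a node of'' the opposite forest) and verify the distinctness and termination of the sequence, which are exactly what certify that the object produced is a genuine expandable sequence and not a cycle.
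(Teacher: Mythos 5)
Your proof is correct and takes essentially the same approach as the paper's: the paper proves part (2) in detail by iterating $\sigma$ forward from the root of $V$, checking at each step via \cref{LEM Brin 1} and \cref{LEM Brin 3} that the current endpoint is either a leaf of $F_D$ or a leaf of $V$, and declares part (1) dual, whereas you prove part (1) by iterating $\sigma^{-1}$ backward from $r(U)$ and apply \cref{LEM Brin 1} and \cref{LEM Brin 2} once at the terminal node, declaring part (2) dual. The substance (termination via finiteness and injectivity of $\sigma$, ruling out a cycle because $r$ would have the wrong status in the relevant forest, and locating the endpoint in the correct component) is identical; you merely make the uniqueness argument and the local description of $U$ more explicit than the paper does.
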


\begin{proof}
We will only prove the second point of the Lemma, as the first point is proven in a very similar manner, using \cref{LEM Brin 2} in place of \cref{LEM Brin 3}.

Observe that $r$ is a leaf of $F_D$, so let $u_1 = r$.
This can be regarded as a trivial expandable sequence.
Since $u_1 = r$ is an interior node of $F_R$, by \cref{LEM Brin 1} we have that $\sigma(u_1)$ cannot be an interior node of $F_D$.
For the same reason, by \cref{LEM Brin 3} we have that either $\sigma(u_1)$ is a node of $F_D$ or it belongs to the component $V$ of $F_R - F_D$.
Thus, $\sigma(u_1)$ is either a leaf of $F_D$ or it is a leaf of $V$, and only one of these is possible.

Now, assume that $u_1, \dots, u_n$ is an expandable sequence with $r = u_1$ and with $\sigma(u_n)$ either a leaf of $V$ or a leaf of $F_D$.
There is at least one such expandable sequence, as shown in the previous paragraph.
If $\sigma(u_n)$ is a leaf of $V$, we are done, so assume $\sigma(u_n)$ is a leaf of $F_D$ instead.
We can then create a longer expandable sequence by adding the element $u_{n+1} = \sigma(u_n)$ at the end of the sequence.
This new sequence satisfies the same property of the previous one: since $u_1 = r$ is not an interior node of $F_R$, by \cref{LEM Brin 3} we have that $\sigma(u_n) = u_{n+1}$ can either be a common leaf of $F_D$ and $F_R$ or a leaf of $V$.

This expandable sequence cannot extend infinitely.
Indeed, if this were not the case, since there is a finite amount of leaves of $F_D$ among which we can choose and $g$ is an invertible map, for some $n$ we would have $u_1 = \sigma(u_n)$.
But $u_1$ would be a leaf of $F_R$, whereas it should be an interior node of $F_R$.
This means that we must reach a leaf $l$ with the property we are looking for, so we are done.
\end{proof}

It follows from the second point of \cref{LEM Brin 4} that, for each non-periodic rearrangement $g$, if we choose a graph pair diagram $(D,R,\sigma)$ \hyperref[TXT minimality choice]{as described earlier}, then for each leaf $r$ of $F_D$ that is an interior node in $F_R$, there is a leaf $l$ of $F_R$ located below $r$ and an $n \in \mathbb{N}^* = \{1, 2, \dots \}$ such that $r, g(r), \dots, g^{n-1}(r)$ are distinct leaves of $F_D$ and $g^n(r) = l$.
In terms of graph pair diagrams, this means that for each edge $e$ of $D$ that has been expanded in $R$, there is an edge $e^*$ of $R$ generated by an expansion of $e$ and an $n \in \mathbb{N}^*$ such that the topological interiors of the cells generated by the edges $e, g(e), \dots, g^{n-1}(e)$ of $D$ are such that $g^n(e) = e^*$ and they are pairwise disjoint (observe that boundary points do not interfere with this because of \cref{RMK interior}).
Essentially, given the topological interior $\mathring{C}$ of a cell whose generating edge is in $D$ and has been expanded in $R$, iterating $g$ maps $\mathring{C}$ to disjoint topological interiors of cells for $n-1$ steps, and then to a topological interior contained inside $\mathring{C}$ itself at the $n$-th step.

\begin{lemma}
\label{LEM non-periodic wandering}
If $g \in G_\mathcal{X}$ is non-periodic, then there exists a $g$-wandering cell.
\end{lemma}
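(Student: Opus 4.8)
The plan is to feed the attracting configuration extracted right after \cref{LEM Brin 4} into a short dynamical argument. Fix the graph pair diagram $(D,R,\sigma)$ chosen \hyperref[TXT minimality choice]{as described above}. First I would observe that a non-periodic $g$ cannot satisfy $F_D = F_R$: if the two forests coincided, $\sigma$ would be a color-preserving automorphism of a single expansion, so $g$ would permute the finitely many cells of one cellular partition by canonical homeomorphisms; a suitable power of $g$ would then restrict to the canonical self-map of each such cell, which is the identity, making $g$ periodic. Hence at least one of $F_R - F_D$, $F_D - F_R$ is non-empty. Since a $g^{-1}$-wandering cell is the same as a $g$-wandering one, and passing to $g^{-1}$ (whose minimal diagram is the flip of the one for $g$, legitimately by \cref{RMK imbalances}) interchanges these two differences, I may assume that $F_R - F_D$ has a non-trivial component.

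Applying the second part of \cref{LEM Brin 4} to such a component, and reading off the consequence spelled out in the discussion following it, I obtain an edge $a$ of $D$, an edge $b$ of $R$ lying strictly below $a$, and an $n \in \mathbb{N}^*$ such that, writing $C_0 := C(a)$, the restriction $g^n|_{C_0}$ is the canonical homeomorphism of $C_0$ onto $C(b) =: D_1 \subsetneq C_0$, while the cells $C_0, g(C_0), \dots, g^{n-1}(C_0)$ have pairwise disjoint interiors. Setting $D_k := g^{kn}(C_0)$ and using that each $g^{kn}$ is a homeomorphism of $X$ (hence preserves inclusions and topological interiors), I get a strictly nested sequence $C_0 = D_0 \supsetneq D_1 \supsetneq D_2 \supsetneq \cdots$ with $g^{kn}(\mathring{C_0}) = \mathring{D_k}$; I will also write $S_j := g^j(C_0)$ for the $n$ ``sheets'', so that $g^j(\mathring{C_0}) = \mathring{S_j}$ and the interiors $\mathring{S_0}, \dots, \mathring{S_{n-1}}$ are pairwise disjoint.

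I then take $A$ to be any cell contained in the non-empty open set $\mathring{C_0} \setminus D_1$, which exists by \cref{RMK balls cells}, and claim it is $g$-wandering. Since the condition $g^m(A) \cap A = \emptyset$ is unchanged under replacing $m$ by $-m$ (apply the homeomorphism $g^{-m}$), it suffices to verify it for every $m \geq 1$. Writing $m = kn + j$ with $k \geq 0$ and $0 \leq j \leq n-1$, I have $g^{kn}(A) \subseteq \mathring{D_k} \subseteq \mathring{C_0}$, hence $g^m(A) = g^j(g^{kn}(A)) \subseteq g^j(\mathring{C_0}) = \mathring{S_j}$. If $j \geq 1$, this is disjoint from $A \subseteq \mathring{S_0}$ because the sheet interiors are disjoint; if $j = 0$, then $k \geq 1$, so $g^m(A) \subseteq \mathring{D_k} \subseteq D_1$, which is disjoint from $A \subseteq \mathring{C_0} \setminus D_1$. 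Either way $g^m(A) \cap A = \emptyset$, so $A$ is $g$-wandering (recall $g^m \neq 1_{G_\mathcal{X}}$ for all $m \neq 0$ as $g$ is non-periodic).

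The entire substantive input is the combinatorial structure produced by Lemmas \ref{LEM Brin 1}--\ref{LEM Brin 4}: once the attracting configuration $g^n(C_0) = D_1 \subsetneq C_0$ with disjoint intermediate sheets is in hand, the wandering property is immediate. The two points I expect to require care, and would flag explicitly, are the possible asymmetry between domain and range imbalance in the polychromatic setting (handled by the reduction to $g^{-1}$ above), and the need to control \emph{all} powers of $g$ rather than only multiples of $n$; the latter is exactly what the ``spiral'' observation resolves, namely that the forward orbit of $A$ either lands in a disjoint sheet or descends strictly into $D_1$, never returning to the fundamental annulus $\mathring{C_0} \setminus D_1$.
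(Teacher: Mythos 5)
Your proof is correct and follows essentially the same route as the paper's: extract from the discussion after \cref{LEM Brin 4} the configuration $g^n(C(e)) = C(e^*) \subsetneq C(e)$ with pairwise disjoint intermediate interiors, pick a cell inside $C(e)$ avoiding $C(e^*)$, and run the division-algorithm argument on $m = qn + r$. The only superfluous step is your reduction to $g^{-1}$: since $\sigma$ is a bijection between the leaves of $F_D$ and $F_R$ and every caret adds at least one leaf, $D \neq R$ already forces both $F_D - F_R$ and $F_R - F_D$ to be non-empty, which is fortunate because the flip of your chosen diagram need not satisfy the (asymmetric) lexicographic minimality conditions of the \hyperref[TXT minimality choice]{choice} when read as a diagram for $g^{-1}$.
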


\begin{proof}
Let $(D,R,\sigma)$ be a graph pair diagram for $g$ as discussed above.
Since $g$ is non-periodic, $D \neq R$, so there exists some edge $e$ of $D$ that has been expanded in $R$.
Consider the edge $e^*$ and the number $n \in \mathbb{N}^*$ given by the discussion above.
Let $f$ be an edge of $R$ that is distinct from $e^*$ and that has been generated by an expansion of $e$, so $C(f) \subseteq C(e)$.
We claim that the topological interior $\mathring{C}(f)$ is $g$-wandering, which would conclude the proof, as it implies that every cell contained in $\mathring{C}(f)$ is also $g$-wandering.

Indeed, let $0 \neq m \in \mathbb{Z}$.
For every $U \subseteq X$, if $g^m(U) \cap U \neq \emptyset$ then $U \cap g^{-m}(U) \neq \emptyset$, so we can assume that $m > 0$.
Let $m = qn + r$, where $q \geq 0$ and $r \in \{0, 1, \dots, n-1\}$.
Since $g^n(\mathring{C}(e)) = \mathring {C}(e^*) \subseteq \mathring{C}(e)$, we have $g^{qn}(\mathring{C}(e)) \subseteq \mathring{C}(e^*)$, and so $g^{qn}(\mathring{C}(f)) \subseteq \mathring{C}(e^*)$.

Now, if $r=0$ we are done, as $f$ and $e^*$ being distinct edges of $D$ implies $\mathring{C}(f) \cap \mathring{C}(e^*) = \emptyset$.
If instead $r \neq 0$, then $g^m(\mathring{C}(f)) = g^{qn + r} (\mathring{C}(f)) \subseteq g^r(\mathring{C}(e))$, which is disjoint from $\mathring{C}(e)$ because $r \in \{1, \dots, n-1\}$.
Thus, since $\mathring{C}(f) \subseteq \mathring{C}(e)$ we have $g^m(\mathring{C}(f)) \cap \mathring{C}(f) = \emptyset$, so we are done.
\end{proof}

\section{Proof of the Theorem}

The results of the previous Section give us the following Corollary:

\begin{corollary}
\label{COR rearrangement conj}
Suppose that $G$ is a weakly cell-transitive subgroup of some rearrangement group $G_\mathcal{X}$.
Let $g \in G$ and let $A \subset X$ be a union of finitely many cells of $\mathcal{X}$. Then there exists some $h \in G$ such that $A$ is weakly $g^h$-wandering.
\end{corollary}

\begin{proof}
Because of \cref{PROP rearrangement wandering}, there exists a weakly $g$-wandering cell $I \subset X$.
Since $G$ is weakly cell-transitive, there exists an $h \in G$ such that $A \subseteq h^{-1} (I)$.
Now, it is clear that $h^{-1} (I)$ is weakly $g^h$-wandering, so $A$ is too and we are done.
\end{proof}

\begin{theorem}
\label{THM main}
Every weakly cell-transitive subgroup $G$ of a rearrangement group $G_\mathcal{X}$ is not invariably generated.
\end{theorem}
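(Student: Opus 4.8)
The plan is to use the weakly wandering cells supplied by \cref{COR rearrangement conj} to conjugate the elements of $G_\mathcal{X}$ into a subgroup that is forced to be proper. First I would record two reductions. Since every rearrangement is encoded by a finite graph pair diagram, $G_\mathcal{X}$ is countable, so I fix an enumeration $G_\mathcal{X} = \{g_1, g_2, \dots\}$. Moreover, restricting a family of conjugates can only shrink the subgroup they generate, so it suffices to produce conjugators $h_n$ for which $H := \langle g_n^{h_n} \mid n \in \mathbb{N} \rangle$ is a \emph{proper} subgroup: the same conjugators, restricted, then defeat every candidate invariable generating set $S \subseteq G_\mathcal{X}$. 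To certify properness I would appeal to minimality: by \cref{PROP rearrangement orbit dense} the action of $G_\mathcal{X}$ on $X$ is minimal, so it is enough to exhibit a point whose $H$-orbit is \emph{not} dense, which by \cref{RMK density} means an orbit that misses the interior of some cell. Thus a subgroup that acts non-minimally on $X$ is automatically proper.

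The heart of the argument is a ping-pong–style trapping built from wandering cells. For each $g_n$, \cref{COR rearrangement conj} — together with the dichotomy of \cref{LEM periodic wandering} and \cref{LEM non-periodic wandering}, which yield respectively a weakly wandering cell and a genuinely wandering, attracting one — lets me pick $h_n$ so that the conjugate $\gamma_n = g_n^{h_n}$ has prescribed wandering behaviour on a cell I choose. Using weak cell-transitivity repeatedly to relocate these cells, I would place them into pairwise disjoint ping-pong regions $X_n$, together with a base point $p$ lying outside all of them, arranged so that every nontrivial reduced word in the $\gamma_n$ carries $p$ into $\bigcup_n X_n$. The $H$-orbit of $p$ then lies in $\{p\} \cup \bigcup_n X_n$, and by keeping the $X_n$ small enough to leave the interior of one fixed cell uncovered, this orbit is non-dense, whence $H \neq G_\mathcal{X}$. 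This simultaneously reflects the free-subgroup corollary obtained from the same technology.

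The step I expect to be the main obstacle is carrying out this coordination uniformly over all (infinitely many) elements, and especially handling the fixed-point-free, full-support elements — the finite-order ``rotation-like'' rearrangements that exist already in $T$ and $V$. Such an element fixes no cell and, having full support, cannot be confined by conjugation to act inside a small region, so it cannot be trapped by the contraction governing the non-periodic case; its ping-pong domain must instead be built from the weak-wandering return structure of \cref{PROP rearrangement wandering}, where a suitable power fixes the cell pointwise while the element itself moves it off. Fitting infinitely many domains of these two incompatible types together while keeping their union away from some cell, so that the orbit of $p$ remains non-dense, is the delicate point, and it is exactly where weak cell-transitivity is used most heavily — to shuffle each conjugate's wandering cell into a free slot of a shrinking ping-pong configuration.
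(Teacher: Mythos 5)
Your overall strategy matches the paper's: use countability of $G_\mathcal{X}$ to handle all elements at once (the paper works with its countably many conjugacy classes, which is equivalent to your element-by-element reduction), conjugate each one so that its wandering behaviour is located compatibly with a family of pairwise disjoint regions, trap the orbit of a base point $p$ in the union of those regions, and conclude via minimality (\cref{PROP rearrangement orbit dense}) that the subgroup generated is proper. However, the mechanism you describe for the trapping is inverted, and this is a genuine gap. You place the weakly wandering \emph{cell} of $\gamma_n$ inside the region $X_n$ and put $p$ outside all the $X_n$; but a wandering set $W \subseteq X_n$ only controls the images of points of $W$, and says nothing about where $\gamma_n^k$ sends $p \notin W$. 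With that arrangement the assertion that ``every nontrivial reduced word in the $\gamma_n$ carries $p$ into $\bigcup_n X_n$'' has no justification. What is actually needed --- and what \cref{COR rearrangement conj} is designed to deliver, since it applies to an arbitrary finite union of cells $A$ --- is that the \emph{complement} $X_n^{\mathsf{C}}$ be weakly $\gamma_n$-wandering. Since $X_n^{\mathsf{C}}$ contains $p$ and all the other regions $X_m$, any power $\gamma_n^k$ that does not fix $X_n^{\mathsf{C}}$ pointwise satisfies $\gamma_n^k(X_n^{\mathsf{C}}) \cap X_n^{\mathsf{C}} = \emptyset$, i.e.\ it throws the whole of $X_n^{\mathsf{C}}$ (in particular $p$ and every $X_m$ with $m \neq n$) into $X_n$; this is exactly the induction carried out in \cref{LEM rearrangement ping-pong}. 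This also dissolves the ``delicate point'' you flag about periodic, full-support elements: nothing is confined to a small region, and the periodic and non-periodic cases are treated uniformly by weak wandering of the large set $X_n^{\mathsf{C}}$.

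A second, smaller omission: you never construct the regions $X_n$. One needs infinitely many pairwise disjoint sets whose complements are finite unions of cells (so that \cref{COR rearrangement conj} applies), with $p$ outside all of them and with their union missing some cell (so that the orbit is not dense, by \cref{RMK density}). The paper obtains these by fixing an address $C_0 \supseteq C_1 \supseteq \cdots$ of a point $p$ and setting $I_n = \mathring{C_n} \setminus C_{n+1}$ for $n \geq 1$; these are pairwise disjoint, converge to $p$, each complement is a finite union of cells, and starting the construction from $n = 1$ leaves a cell untouched by $\bigcup_n I_n$. Without some such explicit construction, and with the wandering sets on the wrong side, the proposal does not yet close.
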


Note that this is equivalent to the \hyperref[THM main intro]{Main Theorem} stated in the \hyperref[SEC intro]{Introduction} because of \cref{PROP rearrangement orbit dense}.

\begin{proof}
Observe that there are at most countably many non-trivial conjugacy classes of $G$, since every rearrangement group $G_\mathcal{X}$ is itself at most countably infinite
(indeed, each rearrangement can be represented as a graph isomorphism of some full expansion graph $E_n$, and for each $n \in \mathbb{N}$ there are finitely many such isomorphisms).
First, assume that there are exactly countably many conjugacy classes, so let $\{Cl_n\}_{n \in \mathbb{N}^*}$ be the non-trivial conjugacy classes of $G$, where $\mathbb{N}^* = \{1, 2, \dots \}$.

Consider a point $p$ of the limit space $X$.
Recall that each point of the limit space is the equivalence class under the gluing relation of at least one element of the space of infinite addresses, which is an infinite sequence of cells decreasing with respect to the inclusion.
Let $C_0 C_1 C_2 \cdots$ be such a sequence for the point $p$.
For each $n \in \mathbb{N}^* = \{ 1, 2, \dots \}$, let $I_n$ be the set difference of the topological interior of $C_n$ with the cell $C_{n+1}$.
We intentionally do not consider $n = 0$ in order to make sure that there is at least some cell that is not contained in the union of these $I_n$, as the cell $C_0$ is the whole limit space whenever the base graph consists solely of an edge.
Note that these $I_n$'s are pairwise disjoint and converge to the point $p$.

Now, for each $n \in \mathbb{N}^*$, the complement $I_n^{\mathsf{C}}$ is a non-trivial union of finitely cells.
More explicitly, if $E$ is an expansion containing the edge $e$ corresponding to $C_n$, then consider $E \triangleleft e$: the set $I_n^{\mathsf{C}}$ is the union of all cells of $E \triangleleft e$ that also appeared in $E$ along with $C_{n+1}$.
Hence, by \cref{COR rearrangement conj} there exists a $\gamma_n$ in the $n$-th conjugacy class $Cl_n$ such that $I_n^{\mathsf{C}}$ is weakly $\gamma_n$-wandering.

We claim that the orbit of $p$ under the action of the subgroup $H = \langle \gamma_{n} \mid n \in \mathbb{N}^* \rangle$ of $G$ is contained in $I = \bigcup_{n \in \mathbb{N}^*} I_n$.
Then, since there is at least a cell that is not contained in $I$, the action of $H$ is not minimal.
So, by virtue of \cref{PROP rearrangement orbit dense}, we would get $H \neq G$ as required.
Thus, \cref{LEM rearrangement ping-pong} below completes the proof in the case of countably many conjugacy classes of $G$.

If there are only finitely many conjugacy classes, then we can do everything in exactly the same way, except that we will have a finitely generated $H = \langle \gamma_1, \dots, \gamma_k \rangle$ under whose action the orbit of $p$ is contained in the union of finitely many $I_n$'s, leading to the same conclusion for the same \cref{LEM rearrangement ping-pong}.
\end{proof}

The following Lemma and its proof are the same as Lemma 18 of \cite{Gelander2016InvariableGO}. The only difference is that it applies to cells of a limit space instead of intervals of $S^1$.

\begin{lemma}
\label{LEM rearrangement ping-pong}
Let $q$ be an element of the $H$-orbit of $p$ that is different from $p$.
Let $g \in H$ be an element of minimal word-length over the alphabet $A := \{ \gamma_n \mid n \in J \}$ such that $g(p) = q$ (where $J$ is either $\mathbb{N}$ or $\{1, \dots,, k \}$).
Assume that $g = \gamma_{i_1}^{k_1} \cdots \gamma_{i_m}^{k_m}$, where:
\begin{itemize}
    \item $k_1, \dots, k_m \neq 0$,
    \item $i_{j+1} \neq i_j$ for each $j=1, \dots, m-1$,
    \item $|k_1| + \dots + |k_m|$ is the word-length of $g$ over $A$.
\end{itemize}
Then $q \in  I_{i_m}$.
\end{lemma}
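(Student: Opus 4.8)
The plan is to run a ping-pong argument, tracking the image of $p$ syllable by syllable and recording in which set $I_n$ each intermediate point lands. The decisive preliminary observation is that $p$ avoids every $I_n$: since $p$ is the common point of the nested cells $C_0 \supseteq C_1 \supseteq \cdots$, we have $p \in C_{n+1}$ for all $n$, whereas $I_n = \mathring{C}_n \setminus C_{n+1}$, so $p \in I_n^{\mathsf{C}}$ for every $n \in J$. Recall also that each $I_n^{\mathsf{C}}$ was arranged, via \cref{COR rearrangement conj}, to be weakly $\gamma_n$-wandering; this yields, for every nonzero integer $t$, the dichotomy that either $\gamma_n^{t}$ fixes $I_n^{\mathsf{C}}$ pointwise or $\gamma_n^{t}(I_n^{\mathsf{C}}) \cap I_n^{\mathsf{C}} = \emptyset$. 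In the latter case, since $X = I_n \sqcup I_n^{\mathsf{C}}$, every point of $I_n^{\mathsf{C}}$ is pushed into $I_n$.

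Next I would set $p_0 := p$ and $p_j := \gamma_{i_j}^{k_j}(p_{j-1})$ for $1 \le j \le m$, so that $p_m = g(p) = q$, and prove by induction that $p_j \in I_{i_j}$ for each $j$. For the base case, $p_0 = p \in I_{i_1}^{\mathsf{C}}$ by the observation above, so the dichotomy applied to $\gamma_{i_1}^{k_1}$ (with $k_1 \ne 0$) places $p_1$ either equal to $p_0$ or inside $I_{i_1}$. For the inductive step, the reduced-word condition $i_j \ne i_{j-1}$ together with the pairwise disjointness of the $I_n$'s gives $p_{j-1} \in I_{i_{j-1}} \subseteq I_{i_j}^{\mathsf{C}}$, so the same dichotomy applied to $\gamma_{i_j}^{k_j}$ places $p_j$ either equal to $p_{j-1}$ or inside $I_{i_j}$. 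Taking $j = m$ then yields $q = p_m \in I_{i_m}$, which is the claim.

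The step I expect to be the crux is excluding, at every stage, the ``fixes pointwise'' alternative of the dichotomy, and this is exactly where the minimality of $g$ enters. If $\gamma_{i_j}^{k_j}$ fixed $I_{i_j}^{\mathsf{C}}$ pointwise, then since $p_{j-1} \in I_{i_j}^{\mathsf{C}}$ we would get $p_j = p_{j-1}$; deleting the syllable $\gamma_{i_j}^{k_j}$ from $g$ would then produce an element of $H$ carrying $p$ to $q$ whose word-length over $A$ is smaller by $|k_j| > 0$, contradicting the minimality of the word-length of $g$. Hence at each stage the disjoint branch of the dichotomy must hold and the image genuinely lands in $I_{i_j}$, which both drives the induction forward and prevents it from stalling. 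I would take care to note that after deleting a syllable the resulting word need not be reduced, but re-reducing only decreases its length further and so does not affect the contradiction.
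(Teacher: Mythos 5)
Your proposal is correct and follows essentially the same route as the paper's proof: an induction along the syllables of $g$, using the weak-wandering dichotomy for each $I_{i_j}^{\mathsf{C}}$, the pairwise disjointness of the $I_n$'s to place each intermediate point in the complement of the next set, and the minimality of the word-length to rule out the ``fixes pointwise'' branch. The only cosmetic difference is that you phrase the exclusion of that branch as a syllable-deletion argument at every stage, whereas the paper folds it into the induction on $m$ by noting that the truncated word $g_1$ is itself minimal; the substance is identical.
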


% \begin{proof}
% The proof is by induction on $m$.
% If $m=1$, then $g = \gamma_{i_1}^{k_1}$, and the set $I_{i_1}^{\mathsf{C}}$ is weakly $\gamma_{i_1}$-wandering.
% Then, since $\gamma_{i_1}^{k_1} (p) = q \neq p$ and $p \in I_{i_1}^\mathsf{C}$, we have that $\gamma_{i_1}^{k_1}$ does not fix $I_{i_1}^{\mathsf{C}}$ pointwise.
% Therefore, the set $\gamma_{i_1}^{k_1} (I_{i_1}^{\mathsf{C}})$ is disjoint from $I_{i_1}^{\mathsf{C}}$, meaning that $\gamma_{i_1}^{k_1} (I_{i_1}^{\mathsf{C}}) \subseteq I_{i_1}$.
% Thus, since $\gamma_{i_1}^{k_1} (p) = q \notin I_{i_1}$, we must have $q \in I_{i_1}$ as needed.

% Assume the lemma holds for $m$ and let $g = \gamma_{i_1}^{k_1} \cdots \gamma_{i_m}^{k_m} \gamma_{i_{m+1}}^{k_{m+1}}$.
% Let $g_1 = \gamma_{i_1}^{k_1} \cdots \gamma_{i_m}^{k_m}$ and $r = g_1 (p)$.
% By induction hypothesis, $r \in I_{i_m}$.
% The facts that the $I_n$'s are pairwise disjoint and $i_m \neq i_{m+1}$ imply that $r \notin I_{i_{m+1}}$.
% By assumption of minimality of the word-length of $g$, we have $q = g(p) = \gamma_{i_{m+1}}^{k_{m+1}} (r) \neq r$, so $\gamma_{i_{m+1}}^{k_{m+1}}$ does not fix $I_{i_{m+1}}^{\mathsf{C}}$ pointwise.
% Therefore, since $I_{i_{m+1}}^{\mathsf{C}}$ is weakly $\gamma_{i_{m+1}}$-wandering, we must have $\gamma_{i_{m+1}}^{k_{m+1}} (I_{i_{m+1}}^{\mathsf{C}}) \subseteq I_{i_{m+1}}$.
% Now, $r \notin I_{i_{m+1}}$, so $q = \gamma_{i_{m+1}}^{k_{m+1}} (r) \in I_{i_{m+1}}$.
% \end{proof}

As a final remark, it is easy to see that the proofs of the previous \hyperref[COR rearrangement conj]{Corollary} and \hyperref[THM main]{Theorem} can also be applied to normal subgroups of a weakly cell-transitive subgroup of a rearrangement group, providing the following more general statement:

\begin{proposition}
\label{PROP main}
If $1 \neq N \trianglelefteq G \leq G_\mathcal{X}$, where $G_\mathcal{X}$ is a rearrangement group and $G$ is weakly cell-transitive, then $N$ is not invariably generated.
\end{proposition}

In particular, we have the following notable result:

\begin{corollary}
\label{COR commutator}
The commutator subgroup of a weakly cell-transitive rearrangement group is not invariably generated.
\end{corollary}

\section*{Acknowledgements}

The authors gratefully acknowledge Francesco Matucci for providing many insights and suggestions and they would like to thank the two anonymous referees for several helpful comments.
The authors are also thankful to Jim Belk and Collin Bleak for useful conversations and advice, and are grateful to Jim Belk and Bradley Forrest for kindly providing the image of the Airplane limit space (\cref{FIG A limit space}) from their work \cite{belk2016rearrangement}.
This same image was also colored by the authors to produce \cref{fig_cells_A} and \cref{fig_action_alpha}.

\printbibliography[heading=bibintoc]

\end{document}